\newtheorem{defn}{Definition}[section]
\newtheorem{prop}[defn]{Proposition}
\newtheorem{lem}[defn]{Lemma}
\newtheorem{thm}[defn]{Theorem}
\newtheorem{cor}[defn]{Corollary}
\newtheorem{rem}[defn]{Remark}
\newtheorem {conj}[defn]{Conjecture}
\newcommand {\ZZ}{{\mathds Z}}
\newcommand {\WW}{{\mathcal W}}
\newcommand {\XX}{{\mathcal X}}
\newcommand {\E}{{\mathcal E}}
\newcommand {\C}{{\mathds C}}
\newcommand {\A}{{\mathcal A}}
\newcommand {\Q}{{\mathds Q}}
\newcommand {\QQ}{{\bf Q}}
\newcommand {\R}{{\mathds R}}
\newcommand {\OO}{{\mathcal O}}
\newcommand {\HH}{{\mathds H}}
\newcommand {\UH}{{\mathcal H}}
\newcommand {\LA}{{\lambda}}
\newcommand {\M}{{\mathcal M}}
\newcommand {\CP}{{\mathds P}}
\newcommand {\D}{{\mathcal D}}
\newcommand {\MS}{{{\mathfrak S}}}
\newcommand {\RE}{{{\mathrm E}}}
\newcommand {\GG}{{\mathcal G}}
\newcommand {\TCC}{\tilde{\mathcal C}}
\newcommand {\TK}{{\tilde{K}}}
\newcommand {\io}{{\iota}}
\newcommand {\lb}{{\lambda}}
\newcommand {\KE}{{K_{E_1 \times E_2}}}
\newcommand {\TKE}{{{\tilde K}_{E_1\times E_2}}}
\newcommand {\CPP}{\CP^1 \times \CP^1}
\newcommand{\TC}{\tilde{C}}
\def\sgn{\operatorname{sgn}}
\def\Pic{\operatorname{Pic}}
\def\div{\operatorname{div}}
\def\deg{\operatorname{deg}}
\def\dim{\operatorname{dim}}
\def\Im{\operatorname{Im}}
\def\ord{\operatorname{ord}}
\def\reg{\operatorname{reg}}
\newcommand {\es}{{\epsilon^*_{\sgn}}}
\tikzset{
  symbol/.style={
    draw=none,
    every to/.append style={
      edge node={node [sloped, allow upside down, auto=false]{$#1$}}}
  }
}
\DeclareMathOperator{\divsr}{div} 
\DeclareMathOperator{\CH}{CH}
\DeclareMathOperator{\SL}{SL}
\DeclareMathOperator{\Mp}{Mp}
\def\skipchunk#1{}
\title{Algebraic cycles and values of Green's functions : Products of Elliptic Curves}
\author{Ramesh Sreekantan (with an appendix by Kannappan Sampath)}
\begin{document}
	\baselineskip=17pt

\maketitle
\begin{abstract}
	
	Gross and Zagier defined certain `higher Green's functions' on products of modular curves and conjectured that the value of these functions at complex multiplication points should be logarithms of algebraic numbers. This is now a theorem of Li \cite{li} and Bruinier-Li-Yang \cite{BLY}.
	
	We relate this conjecture to the existence of motivic cycles in the universal family of products of elliptic curves along the lines of Mellit \cite{mell} and Zhang \cite{zhan}. Using this we are able to prove Zagier's conjecture in some cases when the two CM points have the same discriminant. This is originally a theorem of Viazovska \cite{viaz}. 
		
	 Li, Bruinier-Li-Yang, Bruinier-Ehlen-Yang \cite{BEY}, Viazovska \cite{viaz} and others relate this conjecture to Borcherds' lifts of weakly holomorphic modular forms. Their works, coupled with ours,  suggest that there should be a link between motivic cycles in the universal family on the one hand and Borcherds lifts on the other. We explain why this is the case. This suggests a motivic interpretation of weakly holomorphic modular forms.  In the special case we look at we show that indeed this is true. 
	
\vspace{\baselineskip}
	{\bf Mathematics Subject Classification (2020):} 11G15,14K22,14C25,14G35,19E15

\end{abstract}

\tableofcontents

\section{Introduction}

\subsection{The conjecture of Gross and Zagier}

A conjecture of Gross and Zagier \cite{grza}, \cite{GKZ}, \cite{zagiICM} asserts that the values of certain `higher Green's functions' at CM points are logarithms of algebraic numbers. 

We recall the conjecture in its schematic form (leaving precise definitions to Section~\ref{greensfunctions}):

\begin{conj} Let $\Gamma$ be a congruence subgroup of $\SL_2(\mathbb{Z})$ and let $X=\overline {\Gamma\backslash \HH}$ denote the corresponding compactified modular curve. Let 
	$$f(z)=\sum_{m \in \ZZ} c_f(m)e^{2\pi iz}$$
	be a weakly holomorphic modular form of weight $(-2j)$ for $\Gamma$ with $c_f(-m) \in \ZZ$ for all $m>0$. Let 
	$$G_{j+1}^f(z_1,z_2)=\sum c(-m)m^jG_{j+1}(z_1,T_m z_2)$$
	denote the higher Green's function associated to $f$, where $T_m$ is the $m^{th}$ Hecke correspondence. Let $T_f=\bigcup_{c(-m)\neq 0} T_m$ be the singular locus of $G^f_{j+1}$.

	Let $z_1$ and $z_2$ be CM points of discriminants $D_1$ and $D_2$  respectively lying on $X\times X \backslash T_f$. Then there exists an $\alpha  \in \bar{\Q}$  depending only on $D_1, D_2, j$ and $f$  such that 
	$$(D_1 D_2)^{j/2} G^f_{j+1}(z_1,z_2)=\log|\alpha|.$$

\end{conj}

This conjecture is now a theorem of Li \cite{li} and Bruinier-Li-Yang \cite{BLY}, though conditional results were found by Gross, Kohnen and Zagier \cite{GKZ}, Zhang \cite{zhan}, Mellit \cite{mell}, Viazovska\cite{viaz}, Zhou \cite{zhou} and Bruinier-Ehlen-Yang \cite{BEY}.

The approach of Zhang and Mellit is to link the values of higher Green's functions to regulators of motivic cycles in the generic fibre of the universal family while the approach of Li, Bruinier-Ehlen-Yang and Viazovska is to link the higher Green's functions themselves to Borcherds lifts of weakly holomorphic modular forms. 

That there should be a link between motivic cycles and Borcherds lifts was speculated upon in \cite{sree2000} following the remarkable work of Borcherds \cite{borc} proving the theorem of Gross-Zagier, Hirzebruch-Zagier and van der Geer in one fell swoop. There has been recent progress on this approach in the work of Doran-Kerr \cite{doke} and Brown-Fonseca \cite{brfo}. 

\subsection{Motivic cycles}

In this paper, we construct explicit motivic cycles in the group $H^3_{\M}(A_{\eta},\Q(2))$ where $A_{\eta}$ is the generic fibre of the universal family over the moduli of products of elliptic curves. More precisely, we construct the cycles in the associated Kummer $K3$ surface $\TK_{A_{\eta}}$. 

We have the following theorem (Lemma \ref{lem-higher-chow-cycles-on-KmK3}, Theorem \ref{mainthm})
\begin{thm} 
	
	Let $\A \to X(2) \times X(2)$ be the universal abelian surface that is split as product of two elliptic curves with full level $2$ structure. Over a point $(\lb_1,\lb_2)$ the fibre $A_{\lb_1,\lb_2}$ is isomorphic to $E_{\lb_1}\times E_{\lb_2}$ with $E_{\lambda_i}$ elliptic curve with Legendre parameter $\lambda_i$.
		
	 Let $T_n^{\gamma}$ be a component of the Hecke correspondence corresponding to those pairs $(E,E')$ such that there is an isogeny  of odd degree $n$ from $E \rightarrow E'$.  If we fix such an isogeny $\phi_{\eta}:E_{T^{\gamma}_{n,\eta}} \rightarrow E'_{T{\gamma}_{n,\eta}}$ of the generic fibre over $T^{\gamma}_n$ then there exists a motivic cycle $\xi_n^{\phi,\gamma}$  in the group $H^3_{\M}(A_U,\Q(2))$, where $U=(X \times X )\backslash T_n^{\gamma}$ satisfying the following properties.
	 
	 \begin{itemize}
	 	\item $\xi_n^{\phi,\gamma}$ is indecomposable in the sense of Section \ref{indecomp}.
	
	\item  If $\partial$ denotes the boundary map in the localization sequence (Section \ref{nonarchregulator}) then, 
	$$\partial(\xi_{n}^{\phi,\gamma})=\Gamma_{\phi_{T^{\gamma}_n}}-\Gamma_{-\phi_{T_n^{\gamma}}}$$
	where $\Gamma_{\phi_{T_n^{\gamma}}}$ denotes the closure in the fibre over $T_{n}^{\gamma}$ of the graph of the isogeny $\phi_{\eta}$
	
	\end{itemize}

	\end{thm} 

\skipchunk{\begin{thm} Let $E_1=E_{\lambda_1}$ and $E_2=E_{\lambda_2}$ be two elliptic curves with full level two structure and let $\KE$ and $\TKE$ be the Kummer surface and Kummer $K3$ surface associated to $E_1 \times E_2$ as in Section \ref{sssec-km-surface-prod-ell-curves}. 
Let  $\phi:E_1 \rightarrow E_2$ is an isogeny of degree $n$ with $n$ and let $T_n^{\gamma}$ be a component of  the $n^{th}$ Hecke correspondence parametrizing those pairs $(E_1,E_2)$ where such a $\phi$ exists. Then 
\begin{itemize}
	\item There is a family of  irreducible nodal curves $C_{n,1}=C_{n,1,\lambda_1,\lambda_2}^{\phi,\gamma}$ on $\KE$ with nodes at $2n+1$ specified points on $\KE$ defined as long as $\lb_1,\lb_2$ {\bf do not} lie on $T_n^{\gamma}$.  
	\item   For every node $c=c_{\lb_1,\lb_2}$ of $\KE$ lying on $C_{n,1}$ there exists an   motivic cycle 
$$\xi^{\phi,\gamma}_{n,c}=\xi_{n,c} \in H^3_{\M}(\TKE,\Q(2))$$   
and this determines a family of cycles parametrized by $\lb_1,\lb_2$ and defined as long as $\lb_1,\lb_2$ does not lie on $T_n^{\gamma}$ 
\item This family of cycles is generically  indecomposable in the sense of Section \ref{indecomp}. 
\end{itemize}
\end{thm}}

The construction uses a curious link with the enumerative geometry of rational curves in $\CPP$.

\subsection{Algebraicity Results}

In the second part of this paper  we prove a theorem (Theorem \ref{algebraicity}) which  shows that the regulators of indecomposable cycles imply algebraicity for values of the higher Green's functions of degree $2$ at CM points:

\begin{thm}
	\label{alg}
	Let $X \times X$ be the self product of a modular curve and $A_{\eta}$ the generic fibre of the universal Abelian surface. Let $\xi=\sum(C_i,f_i)$ be an indecomposable cycle in the group $H^{2j+1}_{\M}(A^j_{\eta},\Q(j+1))$ with boundary $\partial(\xi)=\sum_T a_T Z^j_T$, where $Z^j_T$ is the Hecke cycle (Section \ref{heckecycles}) over the component $T$ of a Hecke correspondence $T_{n_T}$

	If $(z_1,z_2)$ is a pair of CM points with the {\em same discriminant} such that $(z_1,z_2)\in X \times X\backslash \bigcup_{a_T \neq 0} T$, then  
	$$\frac{4^jD^j}{\binom{2j}{j}}\sum a_T n^j_TG_{j+1}(z_1,T z_2)=\log  \prod \left| f_i(C_i|_{A^j_{z_1,z_2}} \cap Z^j_{z_1,z_2})\right|$$
	where by $f$ evaluated at a divisor $\sum m_P P$ we mean $\prod f(P)^{m_P}$ and $Z^j_{z_1,z_2}$ is the CM cycle (Section \ref{cmcycles}) in the fibre $A^j_{z_1,z_2}$ over $(z_1,z_2)$. In particular, since $C_i|_{z_1,z_2},Z_{z_1,z_2}$ and $f_i$ are defined over $\bar{\Q}$ the value is the logarithm of an algebraic number. 
\end{thm}
We then apply this to our construction of cycles to obtain algebraicity results. We have the following theorem (Theorem \ref{zagierconjecture})

\begin{thm}  Let $T_n$ be the Hecke correspondence with $n$ odd on $X(2) \times X(2)$. Let $z_1,z_2$ be two CM points with CM by the same field of discriminant $D$ with $(z_1,z_2)\notin T_n$.	Then 
	$$2DnG_2(z_1,T_nz_2)=\log|\alpha|$$
	where $\alpha$ is an algebraic number. In particular, since there are no cusp forms of weight $4$ for $\Gamma(2)$,  the conjecture of Gross and Zagier holds in this special case. 
\end{thm}

If we restrict our cycles and Green's functions to a Hecke correspondence $T_m \subset X(2) \times X(2)$ then one can obtain some results for mixed discriminant CM points along the lines of the work of Mellit \cite{mell}.

The two approaches to this theorem suggests that there should be a link between motivic cycles, on the one hand, and weakly holomorphic modular forms, on the other. This suggests the following conjecture: 

\begin{conj}
	Let $f=\sum c(m)q^m$ be a weakly holomorphic modular form of weight $-2j$ for $\Gamma \subset SL_2(\ZZ)$. Let $A^j_{\eta}$ be the generic fibre of the self product of the universal Abelian surface $E_{1,\eta} \times E_{2,\eta}$ over $X(\Gamma) \times X(\Gamma)$.

	Then, there is a correspondence 
	$$f \leftrightarrow \xi_f$$ 
	where 
	$$\xi_f \in  H^{2j+1}_{\M}(A^{j}_{\eta},\Q(j+1))$$
	satisfies 
	$$\partial(\xi_f)=\sum_{m>0} c(-m) Z^j_{T_m}$$ 
	where $Z^j_{T_m}$ is a Hecke cycle (Section\ref{heckecycles}). Further, the {\bf motivic} Green's function determined by the regulator of $\xi_f$ coincides with the {\bf automorphic} Green's function of Bruinier-Li-Yang: 	
	$$\langle \reg(\xi_f),\eta^j_{z_1,z_2}\rangle=\Phi^j((z_1,z_2),f)$$
	where $\reg$ is as in Section \ref{regulators}, $\eta^j_{z_1,z_2}$ is  as in Section \ref{heckecycles} and $\Phi^j((z_1,z_2),f)$ is a suitable  Borcherds lift.  
\end{conj}

In a companion paper \cite{sree-simple} we construct motivic cycles  in the case when the Abelian surface is simple. This should lead to analogous results of algebraicity for Bruinier's automorphic Green's  functions \cite{brui}. In a related paper \cite{sree-products} we construct cycles in the group $H^5_{\M}(A^2_{\eta},\Q(3))$ which imply similar results for $G_3$. 

\subsection *{Acknowledgements.} 

I would like to thank Jan Bruinier, Ken Sato, Shohei Ma, Vijay Patankar, Yingkun Li, Jens Funke, Kirti Joshi, Subham Sarkar among many others for their comments and suggestions. I would especially like to thank Kannappan Sampath for his help with the appendix and for his careful reading. I would like to thank ISI Bangalore and KAIST, Daejeon for their hospitality when this work was done. I would also like to acknowledge the use of AI - specifically Google Gemini Pro and ChatGPT - in clarifying a lot of arguments and determining constants. Finally I would like to thank the referee of an earlier version of this paper for their very useful comments.

\part{Motivic cycles on Products of Elliptic curves}

\section{Products of Elliptic Curves}

\subsection{Elliptic curves with full level $2$ structure.}

All the material in this section can be found in any standard textbook on elliptic curves, for instance Silverman \cite{silv}. 

Let $\Gamma(1)=\SL_2(\ZZ)$ and let $\Gamma(2)$ be the principal congruence subgroup of $\SL_2(\ZZ)$ of level $2$. Let $Y(1)=\Gamma(1)\backslash \HH=\CP^1-\{\infty\}$. This space parametrizes elliptic curves over $\C$. Let $X(1) \simeq \CP^1$ denote its canonical compactification.

Let $E$ be an elliptic curve with full level $2$ structure. Such a curve can be put in {\bf Legendre form} 
$$E_{\LA}:Y^2=X(X-1)(X-\LA)$$
with the two torsion points being $(0,0),(1,0),(\infty,0)$ and $(\LA,0)$.

The moduli of such elliptic curves is the open modular curve $Y(2)=\Gamma(2)\backslash \HH$ which is $\CP^1-\{0,1,\infty\}$. The natural forgetful map $\nu: Y(2) \to Y(1)$ realizes $Y(2)$ as a six-sheeted cover of $Y(1)$. Let $X(2)\simeq \CP^1$ denote its canonical compactification.

The map $\nu:X(2)\longrightarrow X(1)$ is given by 
$$\nu(\lb)= j(E_{\lb})=256 \frac{(1-\lb+\lb^2)^3}{\lb^2(1-\lb)^2}$$
and is ramified at $0,1$ and $\infty$ which all map to the cusp $\infty$ on $X(1)$. If 
 $E_{\lb_1}$ and $E_{\lb_2}$ are isomorphic, then 
 $$\lb_2 \in \left\{\lb_1,1-\lb_1, \frac{\lb_1-1}{\lb_1},\frac{1}{\lb_1},\frac{1}{1-\lb_1},\frac{\lb_1}{\lb_1-1} \right\}.$$

 \subsection{The Kummer surface of a product of elliptic curves}
 
 \subsubsection{The Kummer curve of an elliptic curve}

The {\bf Kummer curve} $K_{E_{\LA}}$ of $E_{\LA}$ is the curve $E_{\LA}/\iota_{\LA}$, where $\iota_{\LA}$ is the involution \begin{align*}
\iota_{\LA}:E_{\LA} &\longrightarrow E_{\LA}\\
x &\mapsto -x
\end{align*}
 This is $\CP^1$ with $4$ distinguished points $0,1,\infty$ and $\lambda$  and can be thought of as the tangent line at $\LA$ to the curve $X(2)$. Let $\pi_{\lambda}:E_{\lambda} \rightarrow K_{E_{\lambda}}$ denote the map determined by $\iota$. Explicitly, over a point $x$ the fibre is 
 $$\pi^{-1}_{\lb}(x)=\{(x,y),(x,-y)\}$$
 where $y^2=x(x-1)(x-\lb)$. This is ramified at the four 2-torsion points.

\subsubsection{The Kummer surface of a product of elliptic curves}\label{sssec-km-surface-prod-ell-curves}

If $A$ is an Abelian surface  and $\io_A$ is the involution $\io_A(x)=-x$ then the {\bf Kummer surface} $K_A$ is the surface $A/\io_A$. This surface has sixteen nodes corresponding to the sixteen two-torsion points which are the fixed points of $\io_A$. 
 
In the case when $A=E_1 \times E_2$ is the product of two elliptic curves, where $E_i=E_{\lb_i}$, this has the following description. Let $K_{E_1}$ and $K_{E_2}$ be the two Kummer curves. 

The map 
$$E_1 \times E_2 \longrightarrow K_{E_1} \times K_{E_2}$$
is a four sheeted cover of $\CP^1 \times \CP^1$. Let $P_x,P_{-x}$ be the two points over $x$ under the map $E_i \rightarrow K_{E_i}$. The fibre over the a point $(x,x') \in  K_{E_1} \times K_{E_2}$ consists of the four points 
$$\{(P_x,P_{x'}), (P_x,P_{-x'}), (P_{-x},P_{x'}), (P_{-x},P_{-x'})\}$$
The quotient by the involution $\io_{E_1 \times E_2}=\io_{E_1} \times \io_{E_2}$ lies in between  $E_1 \times E_2$ and  $K_{E_1} \times K_{E_2}$. One has 
$$E_1 \times E_2 \stackrel{\pi_{E_1 \times E_2}}{\longrightarrow} K_{E_1 \times E_2} \stackrel {\psi_{E_1 \times E_2}}{\longrightarrow} K_{E_1} \times K_{E_2}$$
Under the involution $\io_A$, $(P_x,P_{x'})$ and $(P_{-x},P_{-x'})$ are identified and so are the points $(P_x,P_{-x'})$ and $(P_{-x},P_{x'})$. 

Since $K_E  \simeq \CP^1$ the Kummer surface is a double cover of $\CP^1 \times \CP^1$ ramified at 8 lines, the lines $\{P\} \times \CP^1$ and $\CP^1 \times \{P\}$ where $P$ is one of the two torsion points $0,1,\infty$ or $\lb_1$,  in the first case, or $\lb_2$ in the second. The sixteen points of intersection of the 8 lines are the images of the $2$-torsion points. By abuse of language, we will call these points two-torsion points as well. 

Let $L_c$, $c\in \{0,1,\infty,\lb_2\}$ denote the lines $\CP^1 \times \{c\}$ and ${}_cL$, $c \in \{0,1,\infty,\lb_1\}$ denote the lines $\{c\} \times \CP^1$. Let $\#_{\lb_1,\lb_2}$ denote the configuration of the 8 lines. 

\begin{center}
	
\begin{tikzpicture}
	
	\filldraw[black,thick](0,0) circle(2pt) node[below left]{$(\lb_1,\lb_2)$};
	\draw[line width=.5mm] (-1,0)--(6,0);
	\draw [line width=.5mm](0,-1)--(0,6);
	\draw  [thin] (1.5,-1)--(1.5,6);
	\draw  [thin] (3,-1)--(3,6);
	\draw  [thin] (4.5,-1)--(4.5,6);
	\draw  [thin] (-1,1.5)--(6,1.5);
	\draw  [thin] (-1,4.5)--(6,4.5);
	\draw  [thin] (-1,3)--(6,3);
	\filldraw[black,thick](0,1.5) circle(2pt) node[below left ]{$0$};
	\filldraw[black,thick](0,3) circle(2pt) node[below left]{$1$};
	\filldraw[black,thick](0,4.5) circle(2pt) node[below left]{$\infty$};
	\filldraw[black,thick](1.5,0) circle(2pt) node[below left]{$0$};
	\filldraw[black,thick](3,0) circle(2pt) node[below left]{$1$};
	\filldraw[black,thick](4.5,0) circle(2pt) node[below left]{$\infty$};
	
\end{tikzpicture}

$K_{E_1}\times K_{E_2}$ with the 8 ramified lines. 

\end{center}

The blow up of the sixteen nodes on the Kummer surface is a $K3$ surface and that is sometimes called the Kummer surface. We will call it the {\bf Kummer $K3$ surface} and denote it by $\TK_{E_1\times E_2}$. It is on this surface that we construct the cycles. Note that it is only the highlighted lines $\{\lb_1\} \times \CP^1$ and $\CP^1 \times \{\lb_2\}$ which change as we vary $\lb_1$ and $\lb_2$. 

\section{Motivic Cycles}

In this section we introduce some background on motivic cycles on a surface.

\subsection{Motivic cycles on a surface} 

Let $X$ be a surface. We are interested in constructing elements in the motivic cohomology group $H^3_{\M}(X,\Q(2))$ which is the same as the higher Chow group $\CH^2(X,1)$ (\cite{MVW}, Lecture 19, page 159). An element $\xi \in H^3_{\M}(X, \Q(2))$ can be represented as a sum 
$$\xi=\sum (C_i,f_i)$$
where $C_i$ are irreducible curves on $X$ and $f_i$ a non-zero rational function on $C_i$ such that they satisfy the co-cycle condition 
$$\sum \div_{C_i}(f_i)=0$$ 
as a codimension-2 cycle on $X$.
Relations in this group are given by the tame symbol map $\tau$,
$$K_2(k(X)) \stackrel{\tau}{\longrightarrow} \bigoplus_{C \in X^{(1)}}  k(C)^*$$
defined as follows: If $f$ and $g$ are two non-zero rational functions on $X$, let $\{f, g\}$ denote the Steinberg symbol. By Matsumoto's theorem, the group $K_2(k(X))$ is generated by these symbols subject to bilinearity and the Steinberg relation $\{f, 1-f\} = 1$ for $f \in k(X)^\times \setminus \{1\}$. Define the tame symbol map $\tau$ on Steinberg symbols by 
$$\tau(\{f,g\})= \sum_{C \in  X^{(1)}} \left(C,(-1)^{\ord_C(f)\ord_C(g)}\frac{f^{\ord_C(g)}}{g^{\ord_C(f)}}\right) $$
where the summation runs over the set $X^{(1)}$ of irreducible codimension $1$ closed subvarieties of $X$; extend the map to all of $K_2(k(X))$ by linearity. A cycle in $H^3_{\M}(X,\Q(2))$ is then equivalent to $0$ if it is the image $\tau(\alpha)$ for some $\alpha$ in $K_2(k(X))$.

One way of constructing motivic cycles is the following: 

\begin{prop}\label{prop-higher-chow-cycles-blow-up}  
Let $Q$ be a {\em nodal rational curve} on $X$ with node $P$. Let $\nu:\tilde{Q}\rightarrow Q$ be its strict transform in the blow up $\tilde{X}$ of $X$ at $P$.  $\tilde{Q}$ meets the exceptional fibre $\RE_P$ at two points $P_1$ and $P_2$. Both $\tilde{Q}$ and $\RE_P$ are rational curves. Let $f_P$ be the function with $\div(f_P)=P_1-P_2$ on $\tilde{Q}$ and similarly let $g_P$ be the function with $\div(g_P)=P_2-P_1$ on $\RE_P$. Then 
	$$(\tilde{Q},f_P)+(\RE_P,g_P)$$
	is an element of $H^3_{\M}(\tilde{X},\Q(2))$. 
	\label{construction}
\end{prop}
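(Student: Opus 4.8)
The plan is to verify directly that the pair $(\tilde Q,f_P)+(\RE_P,g_P)$ satisfies the cocycle condition $\sum\div(f_i)=0$ as a zero-cycle on $\tilde X$; by the identification $H^3_{\M}(\tilde X,\Q(2))\cong CH^2(\tilde X,1)$ recalled above, any such pair represents a class in the motivic cohomology group. So the only thing to check is that $\div(f_P)$, pushed forward from $\tilde Q$ to $\tilde X$, cancels $\div(g_P)$, pushed forward from $\RE_P$ to $\tilde X$.

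First I would pin down the geometry of the blow-up. Since $Q$ is a nodal rational curve whose only singularity is the ordinary double point $P$, its normalization is $\CP^1$, and the classical fact that a single blow-up of the ambient smooth surface at an ordinary node of a curve resolves that node shows that the strict transform $\nu:\tilde Q\to Q$ in $\tilde X$ is isomorphic to this normalization; in particular $\tilde Q\simeq\CP^1$. The exceptional curve $\RE_P$ is a $\CP^1$ as well. Because $P$ is a node --- two distinct branches, hence two distinct tangent directions at $P$ --- the set $\nu^{-1}(P)$ consists of two \emph{distinct} points $P_1\neq P_2$, and these are exactly the points where $\tilde Q$ meets $\RE_P$. (It is here that ``nodal'' rather than ``cuspidal'' is used; for a cusp one would get a single, tangential intersection point and the construction would break down.) Note also that $\tilde Q$ and $\RE_P$ are genuinely different curves on $\tilde X$, since $\RE_P$ is contracted by $\nu$ while $\tilde Q$ surjects onto the curve $Q$.

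Next, on a rational curve the difference of two distinct points is a principal divisor: for any $A\neq B$ on $\CP^1$ there is a rational function, unique up to a nonzero scalar, with divisor $A-B$. Applying this on $\tilde Q$ produces $f_P$ with $\div_{\tilde Q}(f_P)=P_1-P_2$, and on $\RE_P$ produces $g_P$ with $\div_{\RE_P}(g_P)=P_2-P_1$; both are well-defined nonzero rational functions precisely because $P_1\neq P_2$. Pushing forward along the closed immersions $\tilde Q\hookrightarrow\tilde X$ and $\RE_P\hookrightarrow\tilde X$, the zero-cycle associated to the pair is $(P_1-P_2)+(P_2-P_1)=0$ in $Z_0(\tilde X)$. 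Hence the cocycle condition holds and $(\tilde Q,f_P)+(\RE_P,g_P)$ defines an element of $CH^2(\tilde X,1)=H^3_{\M}(\tilde X,\Q(2))$.

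The verification itself is essentially formal once the geometry is in place, so there is no serious obstacle; the one point that genuinely requires the hypotheses is the claim in the second step --- that the strict transform $\tilde Q$ is smooth rational and meets $\RE_P$ in the two \emph{distinct} points $P_1,P_2$. I would only remark that the proposition asserts membership in the group; whether the resulting class is nonzero, let alone indecomposable, is a separate matter, to be addressed later by choosing $Q$ appropriately.
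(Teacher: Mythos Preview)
Your argument is correct. The paper actually does not supply a proof of this proposition at all: it states the result and immediately remarks that the construction was used by Chen--Lewis and by the author in earlier work, so there is nothing to compare against. Your direct verification of the cocycle condition---checking that $\tilde Q$ and $\RE_P$ are rational, that the two intersection points $P_1\neq P_2$ are distinct because $P$ is a node rather than a cusp, and hence that functions $f_P,g_P$ with the prescribed divisors exist and cancel---is exactly the natural (and essentially only) way to justify the statement. One small remark: you assume $P$ is the \emph{only} singular point of $Q$, whereas the proposition as phrased allows $Q$ to have other nodes. This does not affect the argument in any serious way---$\tilde Q$ remains a rational curve, and on any curve with function field $k(t)$ the divisor $P_1-P_2$ is principal---but you might relax that clause.
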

This was used by Lewis and Chen \cite{chle} to prove the Hodge ${\mathcal D}$-conjecture for certain $K3$ surfaces  and by me \cite{sree2014} to prove an analogue of that in the non-Archimedean case for Abelian surfaces.

\begin{center}
	
\begin{tikzpicture}
	
	\draw[name path=conic,domain=-3:3,smooth,variable=\x,black] plot ({\x*\x},{\x});
	
    \node[left] at  (0,0) {$Q$};
	
	\draw[name path=line, black] (4,-3) -- (4,3);
	\path[name intersections={of=conic and line,by={A1,A2}}];
	
	\node [right] at (4,0) {$\RE_P$};
	\foreach \point in {A1,A2}
	\fill (\point) circle (2pt);
	\node[above right] at (A1) {$P_1$};
	\node[below right] at (A2) {$P_2$};
	
\end{tikzpicture}

\end{center}

\subsection {Decomposable and Indecomposable Cycles}

\label{indecomp}

In the group $H^3_{\M}(X,\Q(2))$, {\em decomposable cycles} are those in the image of the product map 
$$H^1_{\M}(X,\Q(1)) \otimes H^2_{\M}(X,\Q(1)) \longrightarrow H^3_{\M}(X,\Q(2))$$
The group $H^1_{\M}(X,\Q(1))$  is $K^*$, where $K$ is the field of definition of $X$ and the group $H^2_{\M}(X,\Q(1))$ is $\Pic(X)\otimes \Q$. Over an algebraically closed field, the decomposable cycles are of the form $(C,a)$ where $a$ is a constant function. Let $H^3_{\M}(X,\Q(2))_{dec}$ denote the group of decomposable cycles. 

The non-zero classes in
\[H^3_{\M}(X,\Q(2))_{ind} = H^3_{\M}(X,\Q(2))/H^3_{\M}(X,\Q(2))_{dec}\]
are called \emph{indecomposable cycles}. In general, they are difficult to construct. 
\[\]

Let $\XX \rightarrow S$ be a family of surfaces  over a base $S$ with $X_s$ the fibre over an irreducible codimensional one closed subvariety $s$ of $S$ and $X_{\eta}$ the fibre over the generic point $\eta=Spec(k(S))$. We say a cycle in $CH^1(X_s)_{\Q}=H^2_{\M}(X_s,Q(1))$ is {\em generic} if it is the restriction to $X_s$ of the closure of a cycle in $CH^1(X_{\eta})_{\Q}=H^2_{\M}(X_{\eta},\Q(1))$. We denote the group of generic cycles by $H^2_{\M}(X_s,\Q(1))_{gen}$. 

A motivic cycle in $H^3_{\M}(X_s,\Q(2))$ is said to be {\em generically decomposable} if it lies in the image of 
$$H^1_{\M}(X_s,\Q(1)) \otimes H^2_{\M}(X_s,\Q(1))_{gen} \longrightarrow H^3_{\M}(X_s,\Q(2))$$
and we denote the image by $H^3_{\M}(X_s,\Q(2))_{gen-dec}$. 

A cycle is said to be {\em generically indecomposable} if it is non-zero in quotient group  
$$H^3_{\M}(X_s,\Q(2))_{gen-ind}=H^3_{\M}(X_s,\Q(2))/H^3_{\M}(X,\Q(2))_{gen-dec}$$
Note that when the Chow group of $X_s$ becomes larger a cycle could be decomposable but generically indecomposable. 

In this paper we show that certain cycles we construct using Proposition~\ref{construction} in the Kummer $K3$ surface of a product of two elliptic curves are generically indecomposable. In the companion paper \cite{sree-simple} we show that this is also the case for the Kummer surface of a simple Abelian surface.

\subsection{Regulators}

\label{regulators}

\subsubsection{Archimedean regulator}

Beilinson \cite{beil} defined a regulator map from the motivic cohomology group to the Deligne cohomology group  
$$H^3_{\M}(X,\Q(2)) \stackrel{\reg}{\longrightarrow} H^3_{\D}(X,\Q(2))$$ 
Here $H^3_{\D}(X,\Q(2))$ can be identified with a generalized torus 
$$H^3_{\D}(X,\Q(2))=\frac{F^1H^2(X,\C)^*}{H_2(X,\Q)}$$
Explicitly it can be described as follows: Let $\xi=\sum (C_i,f_i)$ be a cycle and $\omega$ in $F^1H^2(X,\C)$. Let $[\infty,0]$ be the path from $\infty$ to $0$ along the real axis and $\gamma_i=f_i^*([\infty,0])$. Let $\gamma=\sum \gamma_i$. The condition $\sum \div(f_i)=0$ implies that the cycle $\gamma$ is, up to torsion, a boundary; so there is a 2-cycle $\Gamma$ such that $\partial(\Gamma)=\gamma$. 

The regulator is then given by
$$\langle  \reg(\xi),\omega \rangle=\int_{\Gamma} \omega + \frac{1}{2\pi \sqrt {-1}} \sum_i \int_{C_i \backslash \gamma_i}  \log(f_i)\omega$$
where $\log(f_i)$ is pull back of the  the principal branch of the logarithm on $\CP^1 \backslash [\infty,0]$ by $f_i$. 

The Archimedean regulator  or Real regulator is the map to $H^3_{\D}(X,\R(2))$. Since 
$$H^3_{\D}(X,\R(2))=\frac{F^1H^2(X,\C)^*}{H_2(X,\R)}=\frac{F^1H^2(X,\R \oplus \R(1))^*}{H_2(X,\R)}=F^1H^1(X,\R(1))^*$$
so it is a current on $F^1H^1(X,\R(1))$ and becomes, for $\omega \in F^1H^1(X,\R(1))$, 
$$\langle \reg_{\R}(\xi),\omega \rangle=\frac{1}{2\pi \sqrt {-1}}\sum \int_{C_i \backslash \gamma_i} \log|f_i|\omega$$
The advantage of considering this is that the target is a real vector space and not a generalized torus, so determining if the real regulator is non-zero is easier. 

Note that our description descends to a well-defined map on the motivic cohomology group. For a relation in the group coming from the tame symbol of functions the regulator can be seen to be $0$. 

\subsubsection {`Non-Archimedean' regulator}

\label{nonarchregulator}

One also has a `non-Archimedean' regulator which is the boundary map in a certain localization sequence. Let $S$ be a variety, $s$ an irreducible divisor and $U=S \backslash \{s\}$. If $\XX \rightarrow S$ is a family of surfaces over $S$ and $X_U$ and $X_s$ the fibres over $U$ and $s$ respectively, the usual localization sequence relates the Chow groups of $\XX$ and $X_U$ and $X_s$. In terms of motivic cohomology groups (recall $CH^p(X)=H^{2p}_{\M}(X,\Q(p))$) this is 
$$H^2_{\M}(X_s,\Q(1)) \rightarrow H^{4}_{\M}(\XX,\Q(2)) \rightarrow H^{4}(X_U,\Q(2))$$

However, it is not left exact. The localization sequence for motivic cohomology groups \cite{bloc} extends this to a long exact sequence. We can also extend it by replacing $s$ by all irreducible divisors of $S$  and $U$ by the generic point $\eta$  of $S$. Let $X_{\eta}$ denote the generic fibre. Then one has 
$$\cdots \longrightarrow  H^3_{\M}(X_{\eta},\Q(2)) \stackrel{\oplus_{s \in S^1} \partial_s}{\longrightarrow} \bigoplus_{s \in S^1}  H^2_{\M}(X_s,\Q(1)) \longrightarrow H^4_{\M}(\XX,\Q(2)) \longrightarrow \cdots $$
Here $S^1$ is the set of all irreducible closed subvarieties of codimension $1$  of $S$. 

The boundary map $\partial=\oplus \partial_s$ is defined as follows
$$\partial \left(\sum (C_i,f_i)\right)= \sum_i \div_{\bar{C_i}}(f_i).$$
where $\bar{C_i}$ is the closure of $C_i$ in  $\XX$. The co-cycle condition $\sum \div(f_i)=0$ ensures that $\sum \overline{\div(f_i)}=0$ so the image of $\partial$ is supported on the `vertical' fibre $X_s$ since the horizontal components of the divisor cancel out.

We call $\partial_s$ the `non-Archimedean' regulator as in the case when $S$ is $Spec(\OO_K)$, where $\OO_K$ is the ring of integers of a number field, this is boundary map to the fibres over a finite prime. In fact the Archimedean regulator can be viewed as the `boundary map to the fibre at $\infty$' in the sense of Manin \cite{mani}.

\subsubsection{Applications to indecomposability}

We have the following proposition - which is due to Chen and Lewis \cite{chle} in the Archimedean case and me \cite{sree2014} in the non-Archimedean case. Let $\XX \rightarrow S$, $X_{U}$ and $X_s$ as before.  

\begin{prop} Let $\xi_{\eta}$ be a cycle in $H^3_{\M}(X_{\eta},\Q(2))$ and $\xi_s$ the restriction to the fibre over $s$ - if it is defined. Let $NS(X_{\eta}) \subset H^2(X,\Q)$ be the Neron-Severi group of the fibre over $U$ - namely the image of  $CH^1(X_{\eta})=H^2(X_{\eta},\Q(1))$ under the cycle class map. Then if either of the following holds 
\begin{itemize} 
	
	\item (Archimedean) For some  $\omega_{\eta} \in F^1H^2(X_{\eta})$ in $NS(X_{\eta})^{\perp}$ 
$$\langle\reg(\xi_{\eta}),\omega_{\eta}\rangle \neq 0$$

\item (Non-Archimedean) If there is a $Z_s \in CH^1(X_s)$ which is not in $CH^1(X_s)_{gen}$ and further 
$$\langle \partial (\xi_{\eta})|_{X_s},Z_s \rangle \neq 0$$
where $\langle \;,\;\rangle$ is the intersection pairing on $H^2(X_s,\Q)$.
\end{itemize}

then $\xi_U$ is indecomposable. 
\label{indecomposiblecriterion}
\end{prop}

\begin{proof}If $\xi=(C_{\eta},a_{\eta})$ is a decomposable cycle, then the class of $C_{\eta}$ lies in $NS(X_{\eta})$ and 
$$\langle \reg(\xi_{\eta}),\omega_{\eta}\rangle = \log(a_{\eta}) \int_{C_{\eta}} \omega_{\eta} =0$$
since by assumption $\omega_{\eta}$ is in $NS(X_{\eta})^{\perp}$.

Similarly $\partial(\xi)|_s=\ord_s(a) \bar{C}_{\eta} |_s$ and so 
$$(\partial(\xi)|_s,Z_s)=\ord_s(a) (\bar{C_{\eta}}|_s,Z_s)=0$$
where $\bar{C_{\eta}}$ is the closure of $C$ in $\XX$. Hence, in either case, if the pairing is non-zero, the cycle cannot be decomposable. 

\end{proof}

The condition essentially says that if the boundary $\partial(\xi)|_s$ involves `new' cycles which do not come from the restriction of generic cycles then the cycle is generically indecomposable.

A cycle satisfying the  Archimedean condition is usually called {\em regulator indecomposable}. Most authors \cite{chle}, \cite{sato}, \cite{coll} compute 
$$\langle\reg(\xi_{\eta}),\omega_{\eta} \rangle$$
as a function on the base, analogous to a normal function, and show that it is non-trivial, which implies that the cycle is generically indecomposable. The non-Archimedean case is sometimes easier as one might have a good description of the `new' cycles in the special fibre $X_s$ from which it is easy to show that the pairing is non-zero. This is what we do. 

\section{Motivic cycles on the Kummer Surface}

In this section we construct motivic cycles in the group $H^3_{\M}(E_1 \times E_2,\Q(2))$ where $E_1$ and $E_2$ are elliptic curves. The construction is a generalization of the work of Sato \cite{sato}. In \cite{sree-simple} we had a very  similar construction of elements in  $H^3_{\M}(A,\Q(2))$ where $A$ is a simple Abelian surface.

\label{sec-mot-cycles-on-Km-surfaces}

Let $E_1=E_{\lb_1}$ and $E_2=E_{\lb_2}$ be two elliptic curves with full level $2$ structure. Suppose there exists an isogeny of odd degree $n$,  
$$\phi:E_1 \rightarrow E_2$$
Let 
$$\Gamma_{\phi}=\{(x,\phi(x))|x \in E_1\}$$ 
be the graph of the isogeny. This is an elliptic curve in $E_1 \times E_2$.

Let 
$$\pi:E_1 \times E_2 \longrightarrow \KE$$
be the map to the Kummer surface and let 
$$C_{\phi}=\pi(\Gamma_{\phi})$$
Since $n$ is odd, $\phi$ takes the non zero 2-torsion points of $E_1$ to the non-zero 2-torsion points of $E_2$ and $0$ in $E_1$ to $0$ in $E_2$. 

The map $\pi:\Gamma_{\phi} \longrightarrow C_{\phi}$ is a double cover ramified only at the four points $(\alpha,\phi(\alpha))$ on $E_1 \times E_2$,  where $\alpha$ is in $E_1[2]$, the set of 2-torsion points of $E_1$. 

Since $\phi:E_1 \rightarrow E_2$ is an isogeny, $\phi(E_1)=E_2$ and they have the same $j$-invariant. So $\phi(\lambda_1)$ is one of the six pre-images of $j(E_2)$ under $\nu$, one of which is $\lambda_2$. Therefore we  may  assume that $(\lb_1,\lb_2)$ lies on $C_{\phi}$ as the argument is the same in all the cases. 

So without loss of generality, we may assume that the branch points are  $(\lb_1,\lb_2)$, $(0,\phi(0))$, $(1,\phi(1))$ and $(\infty,\phi(\infty))$, where by $\phi(c)$ for a cusp $c$ which corresponds to a two torsion point $P$ on $E_1$ we mean the cusp corresponding to the two torsion point $\phi(P)$. From the Riemann-Hurwitz theorem  $C_{\phi}$ is a {\em rational} curve. 

We also have the map 
$$\psi: \KE\longrightarrow K_{E_1} \times K_{E_2} \simeq \CPP$$
which is ramified at the eight lines $\{c\} \times \CP^1$ and $\CP^1 \times \{c\}$ where $c$ is $\{0,1,\infty\}$ or $\lb_1$ in the first case and $\lb_2$ in the second. Let 
$$Q_{\phi}=\psi(C_{\phi})$$
For any pair of points $(P,P')$ the four points $(P,P'), (P,-P'), (-P,P'), (-P,-P')$ are identified under the map $\psi \circ \pi$. So both  $\Gamma_{\phi}$ and $\Gamma_{-\phi}$ map to the same curve $Q_{\phi}$ in $\CPP$. However, in $\KE$, the curves $C_{\phi}$ and $C_{-\phi}$ are not the same though
$$Q_{\phi}=\psi(C_{-\phi})$$
as well. 
\subsection{Geometry of $\CPP$}

The Neron-Severi group of $\CPP$ is isomorphic to $\ZZ\oplus \ZZ$ and is  generated by the rulings $\CP^1 \times \{P\}$ and $\{Q\} \times \CP^1$ for any points $P$ and $Q$.  Since $H^1(\CPP,\ZZ)=0$, any curve  $C$ in $\CPP$ is rationally equivalent to a curve of the form $a(\CP^1\times \{P\}) + b(\{Q\} \times \CP^1)$ where $a=(C,\CP^1 \times \{P\})$ and $b=(C,\{Q\} \times \CP^1)$ are the intersection numbers of the two curves. The tuple $(a,b)$ is called the {\em bidegree} of $C$. 

Suppose $\phi:E_1 \rightarrow E_2$ is an isogeny of odd degree $n$.  
Hence there are $n$ points of the form $(x,0)$ on $E_1\times 0$ and the one point $((0,0)$ on $0 \times E_2$ on $\Gamma_{\phi}$. These points map to distinct points on $Q_{\phi} \cap (\CP^1 \times \lb_2)$ and $Q_{\phi} \cap (\lb_1 \times \CP^1)$ and every point of intersection is of this type. Therefore the curve $Q_{\phi}$ is curve of bidegree $(n,1)$. 

We have the following lemma about curves in $\CPP$.

\begin{lem} Let $C$ be a curve on $\CPP$  of bidegree $(a,b)$. 
	\label{coray}. 
	
	\vspace{\baselineskip}
	
\noindent (1). The arithmetic genus of the curve $C$ is 
	$$p_a(C)=(a-1)(b-1)$$ 
	
\noindent (2). The dimension of the linear system of the divisor $C$ in $\CPP$ is $$\dim|C|=ab+a+b=2(a+b)+p_a-1$$. 
	
\end{lem}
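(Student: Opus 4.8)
\textbf{Proof proposal for Lemma \ref{coray}.}

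The plan is to exploit the fact that a curve of bidegree $(a,b)$ on $\CPP$ is a divisor in the complete linear system $|aH_1 + bH_2|$, where $H_1 = \CP^1 \times \{P\}$ and $H_2 = \{Q\} \times \CP^1$, and to run the standard adjunction/Riemann--Roch computation. First I would recall that the canonical divisor of $\CPP$ is $K = -2H_1 - 2H_2$ and that the intersection numbers are $H_1^2 = H_2^2 = 0$, $H_1 \cdot H_2 = 1$. For part (1), adjunction gives
$$2 p_a(C) - 2 = C \cdot (C + K) = (aH_1 + bH_2)\cdot\bigl((a-2)H_1 + (b-2)H_2\bigr) = a(b-2) + b(a-2) = 2ab - 2a - 2b,$$
so that $p_a(C) = ab - a - b + 1 = (a-1)(b-1)$, which is the claimed formula. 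This handles part (1) cleanly, with the only caveat that $C$ should be taken to be (at worst) an irreducible curve so that $p_a$ is the arithmetic genus of the abstract curve; for reducible or non-reduced members one interprets $p_a$ via $\chi(\OO_C)$ and the same Euler-characteristic bookkeeping applies.

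For part (2), I would compute $h^0(\CPP, \OO(aH_1 + bH_2))$ directly from the Künneth/Segre description: sections of $\OO(aH_1+bH_2)$ are bihomogeneous forms of bidegree $(a,b)$, so $h^0 = (a+1)(b+1)$, assuming $a, b \geq 0$ (the relevant case here, since $Q_\phi$ has bidegree $(n,1)$ with $n$ odd and positive). The dimension of the linear system is then $\dim|C| = h^0 - 1 = (a+1)(b+1) - 1 = ab + a + b$. To get the stated reformulation $ab + a + b = 2(a+b) + p_a - 1$, I would simply substitute $p_a = (a-1)(b-1) = ab - a - b + 1$ from part (1): indeed $2(a+b) + (ab-a-b+1) - 1 = ab + a + b$, as required. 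Alternatively one can see $\dim|C| = ab+a+b$ as a consequence of Riemann--Roch on the surface together with vanishing of $h^1$ and $h^2$ of $\OO(aH_1+bH_2)$ for $a,b\ge 0$, which follows from the Künneth formula and the vanishing of higher cohomology of $\OO_{\CP^1}(a)$ and $\OO_{\CP^1}(b)$ in that range.

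The main obstacle, such as it is, is not conceptual but a matter of hypotheses: both formulas as stated require $a, b \geq 0$ (so that the linear system is nonempty and the cohomological vanishing holds), and for part (1) to read as an honest genus one wants $C$ irreducible; I would state these mild conditions explicitly, noting that in the application $C = Q_\phi$ has bidegree $(n,1)$ with $n \geq 1$, so everything is in force. A secondary point to be careful about is the identification ``sections of $\OO(aH_1 + bH_2)$ = forms of bidegree $(a,b)$'': this is immediate from $\CPP \hookrightarrow \CP^3$ via Segre, but I would phrase it as $H^0(\CP^1,\OO(a)) \otimes H^0(\CP^1,\OO(b))$ via Künneth to keep the argument self-contained. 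No step requires more than routine intersection theory on a smooth projective surface, so I expect the write-up to be short.
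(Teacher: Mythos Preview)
Your argument is correct: adjunction on $\CPP$ gives $p_a(C)=(a-1)(b-1)$, and the K\"unneth count of bihomogeneous forms of bidegree $(a,b)$ gives $\dim|C|=(a+1)(b+1)-1=ab+a+b$, with the reformulation $2(a+b)+p_a-1$ following by substitution. The paper does not actually prove this lemma; it simply cites Coray \cite{cora}, Lemma~1.1. So your write-up is a genuine, self-contained proof where the paper only gives a reference, and your caveats about needing $a,b\geq 0$ and $C$ irreducible (both satisfied in the application to $Q_{\phi}$ of bidegree $(n,1)$) are appropriate to include.
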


\begin{proof} Coray \cite{cora}, Lemma 1.1 
	\end{proof}
In particular, $p_a(Q_{\phi})=0$, so it is a {\em rational} curve. Further, the dimension of the linear system $|Q_{\phi}|$ is $2n+1$. 

Recall that we have the eight distinguished lines $L_c$ and ${}_cL$ in $K_{E_1}\times K_{E_2}$ which are ramified under the map $\psi: \KE \longrightarrow K_{E_1} \times K_{E_2}$. In general, a curve of bidegree $(a,b)$ will meet the $8$ lines at $4(a+b)$ points so for $Q_{\phi}$ the expected number is $4(n+1)$.

However, when $\lambda_1=\lambda_2$, observe that the diagonal $Q_{id}$ of bidegree $(1,1)$ meets the $8$ lines only at the $4$  points $(\lb_1,\lb_2)$, $(0,0)$, $(1,1)$ and $(\infty,\infty)$ and {\em not} the expected $8$ points, since all the points of intersection are double points. We show that this is also the case in general.

\begin{lem} Every point of intersection of $Q_{\phi}$ with the  eight distinguished lines $L_c$ and ${}_cL$ has multiplicity $2$. Hence there are only  $2(n+1)$ distinct points. 
	\label{doubleintersection} 
\end{lem}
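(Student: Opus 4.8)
The plan is to analyze the intersection of $Q_\phi$ with each of the eight distinguished lines by tracing points through the tower $E_1 \times E_2 \to K_{E_1\times E_2} \to K_{E_1}\times K_{E_2} = \CPP$. The key observation is that $Q_\phi$ is the image of the graph $\Gamma_\phi$, and $\Gamma_\phi$ meets a line like $E_1[2]\text{-fibre} \times E_2$ or $E_1 \times E_2[2]\text{-fibre}$ transversally in $\Gamma_\phi$; the doubling of multiplicity must come entirely from the branching of the maps $\pi$ and $\psi$. So first I would recall that $\psi\circ\pi \colon E_1\times E_2 \to \CPP$ is the quotient by the group $\langle \iota_{E_1}\rangle \times \langle \iota_{E_2}\rangle \cong (\ZZ/2)^2$, which acts on the second factor by $\pm 1$ in each coordinate, and that the eight distinguished lines $L_c$ (for $c\in\{0,1,\infty,\lb_2\}$) and ${}_cL$ (for $c\in\{0,1,\infty,\lb_1\}$) are exactly the branch locus: ${}_cL$ is the image of $\{P\}\times E_2$ where $P\in E_1[2]$ maps to $c$, and $L_c$ is the image of $E_1\times\{P'\}$ where $P'\in E_2[2]$.

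The heart of the argument: fix one of the lines, say ${}_{c}L = \{c\}\times\CP^1$ with $c = \pi_{\lb_1}(P)$ for a $2$-torsion point $P \in E_1[2]$. A point of $Q_\phi \cap {}_cL$ comes from a point $(x,\phi(x))\in\Gamma_\phi$ whose image in $K_{E_1}$ is $c$, i.e. $x = \pm P$, i.e. $x = P$ since $P$ is $2$-torsion. So set-theoretically $Q_\phi\cap{}_cL$ is the image of the finitely many points $(P,\phi(P))$ — but wait, these collapse further: since $P$ ranges over a single $2$-torsion point we get one point, but the full intersection cycle must be computed with multiplicity. The point is that $\psi\circ\pi$ is ramified to order $2$ along the divisor $\{P\}\times E_2$ in the direction \emph{transverse} to that divisor (the $x \mapsto -x$ direction near $x=P$), and $\Gamma_\phi$ is transverse to $\{P\}\times E_2$, so the composite $\Gamma_\phi \to Q_\phi$, followed by the inclusion of $Q_\phi$ and intersection with ${}_cL$, picks up a factor of $2$ in the local intersection number. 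Concretely, choose a local parameter $t$ on $E_1$ at $P$; then $\pi_{\lb_1}$ has the form $t \mapsto c + (\text{unit})\cdot t^2$ near $P$, so the pullback of a local equation cutting out ${}_cL$ vanishes to order $2$ along $\Gamma_\phi$ at $(P,\phi(P))$. Summing over all such points and using that $Q_\phi$ has bidegree $(n,1)$ so that $(Q_\phi, {}_cL) = 1$ for the three $c\in\{0,1,\infty\}$ corresponding to the ruling with $a=n$... I need to be careful: by the bidegree computation just before the lemma, $Q_\phi$ meets $\lb_1\times\CP^1$ in $n$ points counted with multiplicity and $\CP^1\times\lb_2$ in $n$ as well? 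No — bidegree $(n,1)$ means $(Q_\phi, \CP^1\times\{pt\}) = n$ and $(Q_\phi, \{pt\}\times\CP^1) = 1$; I would match this against the preceding paragraph's count of $n$ points of the form $(x,0)$ on $E_1\times 0$ and one point $(0,0)$ on $0\times E_2$ to identify which of the eight lines are "horizontal of degree $n$" versus "vertical of degree $1$", and then in every case show the raw set-theoretic intersection has half the expected cardinality, each point occurring with multiplicity exactly $2$.

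So the steps in order: (1) Identify the eight lines as the images of the coordinate $2$-torsion translates $\{P\}\times E_2$ and $E_1\times\{P'\}$ under $\psi\circ\pi$, and note these constitute the ramification divisor. (2) For a fixed line, show that a point of $Q_\phi$ on it lifts to a point of $\Gamma_\phi$ lying on the corresponding $2$-torsion translate; since the relevant coordinate is a $2$-torsion point and $\phi$ is an odd-degree isogeny (hence injective on $2$-torsion, in fact bijective), count these lifts. (3) Do the local computation with parameter $t$: $\pi$ is $t\mapsto c+O(t^2)$, $\Gamma_\phi$ is a smooth curve transverse to the branch divisor, so the local intersection multiplicity of $Q_\phi$ with the line at the image point is $2$. (4) Conclude that the $4(n+1)$ expected intersection points (with multiplicity) are distributed over exactly $2(n+1)$ distinct points, each of multiplicity $2$. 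The main obstacle I anticipate is bookkeeping at the four "mixed" points $(\lb_1,\lb_2)$, $(0,\phi(0))$, $(1,\phi(1))$, $(\infty,\phi(\infty))$ — these lie on \emph{two} of the eight lines simultaneously (they are the sixteen-minus-twelve "double" points of the configuration lying on $C_\phi$), so the curve $C_\phi$ (resp. $Q_\phi$) passes through a node of the line configuration there, and one must check the multiplicity-$2$ statement holds separately for each of the two lines through such a point, using that $C_\phi$ is smooth there (it is the image of the ramification points $(\alpha,\phi(\alpha))$ of $\pi|_{\Gamma_\phi}$, which are smooth points of $\Gamma_\phi$) — this requires the explicit Legendre-form local analysis at $\lb_1, 0, 1, \infty$ rather than the uniform argument at a generic $2$-torsion point, but it is the same computation with $c$ possibly equal to a cusp.
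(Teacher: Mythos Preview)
Your local ramification approach is a reasonable alternative to the paper's argument, but there is a genuine gap in your treatment of the four ``corner'' points, and it stems from conflating the order of vanishing on $\Gamma_\phi$ with the intersection multiplicity on $Q_\phi$.

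Here is the issue. You compute that the pullback of a local equation for ${}_cL$ vanishes to order $2$ on $\Gamma_\phi$ at $(P,\phi(P))$ with $P\in E_1[2]$, and conclude that $(Q_\phi\cdot {}_cL)_q=2$. But the map $\Gamma_\phi\to Q_\phi$ (the composite $\psi\circ\pi|_{\Gamma_\phi}$) has degree $2$, and at precisely these four points it is \emph{ramified} with index $2$ --- these are the four ramification points of $\pi|_{\Gamma_\phi}$ you yourself identify later. So the order $2$ upstairs corresponds to order $2/2=1$ on $Q_\phi$: the curve $Q_\phi$ meets each vertical line ${}_cL$ \emph{transversally} (consistent with bidegree $(n,1)$ giving $(Q_\phi\cdot {}_cL)=1$). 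The multiplicity $2$ in the configuration $\#_{\lambda_1,\lambda_2}$ at a corner point is $1+1$, coming from the two lines through it, not from tangency. Your proposed check in the last paragraph (``the multiplicity-$2$ statement holds separately for each of the two lines'') is therefore aiming at the wrong target. At the non-corner points on the horizontal lines $L_c$ your argument is fine: there $\Gamma_\phi\to Q_\phi$ is unramified, so order $2$ upstairs really does give tangency $(Q_\phi\cdot L_c)_q=2$.

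For comparison, the paper takes a more uniform route: it shows that the preimage $\psi^{-1}(Q_\phi)$ splits as $C_\phi\cup C_{-\phi}$, and that $C_\phi\cap C_{-\phi}$ sits exactly over $Q_\phi\cap\#_{\lambda_1,\lambda_2}$ by computing $\Gamma_\phi\cap\Gamma_{-\phi}$ directly. Since the preimage of $Q_\phi$ acquires a node over each such point, the double-cover geometry forces the multiplicity to be $2$ in the configuration without distinguishing corners from tangency points. Your ramification approach can certainly be completed, but you must separate the two cases and account for the ramification of $\Gamma_\phi\to Q_\phi$ when passing from orders on $\Gamma_\phi$ to multiplicities on $Q_\phi$.
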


\begin{proof}
	
	Let $L_c$ and ${}_cL$ denote the eight distinguished ramified  lines and $\#_{\lb_1,\lb_2}$ the configuration of the eight lines.

	 We will show that the set of points in $\KE$ lying over  $Q_{\phi} \cap  \#_{\lb_1,\lb_2}$  is  the set 
	$C_{\phi}\cap C_{-\phi}$. Since $Q_{\phi}$ is the image of both $C_{\phi}$ and $C_{-\phi}$ the image of the points of $C_{\phi} \cap C_{-\phi}$ intersect the lines $L_c$ and ${}_cL$  with  multiplicity $2$. Further, they are nodes of $\psi^{-1}(Q_{\phi})$.
	
	Let $P$ be a point in $C_{\phi} \cap C_{-\phi}$. Then $P$ lies in the image of $\Gamma_{\phi} \cap \Gamma_{-\phi}$. 
	$$\Gamma_{\phi} \cap \Gamma_{-\phi}=\{(x,y)| \phi(x)=y=-\phi(x)=\phi(-x)\}.$$
	This implies either $x=-x$ or $\phi(x)=-\phi(x)$. Therefore at least one of $x$ or $\phi(x)$ is a 2-torsion point. Conversely if $(x,y)$ is on $\Gamma_{\phi}$ where $x$ or $y$ is a 2- torsion point then 
	$$-\phi(x)=\phi(-x)=\phi(x)  \text{ if } x \in E_1[2]$$
	or  
	$$\phi(x)=y=-y=-\phi(x) \text{ if }  y \in E_2[2]$$
	So this implies that 
	$$\Gamma_{\phi} \cap \Gamma_{-\phi} =\Gamma_{\phi} \cap \left( \bigcup_{x\in E_1[2]}  \{x\} \times E_2 \cup \bigcup_{y \in E_2[2]} E_1 \times \{y\}\right)$$
	The image of these sets under $\pi$ imply 
    $$C_{\phi} \cap C_{-\phi}=C_{\phi} \cap \left(\bigcup_{c \in \{0,1,\infty,\lb_2\}} L_c \cup \bigcup_{c \in \{0,1,\infty,\lb_1\}} {}_cL\right)$$
    and finally under $\psi$ this maps to 
    $$Q_{\phi} \cap \left(\bigcup_{c \in \{0,1,\infty,\lb_2\}} L_c \cup \bigcup_{c \in \{0,1,\infty,\lb_1\}} {}_cL\right)$$
	
\end{proof}

The points are of the following type - there are $4$ points of the type $(c,\phi(c))$ where $c$ is a cusp and $\phi(c)$ is as before. The others are the images of the points where the curve $\Gamma_{\phi}$ meets $E_1 \times \{P_c\}$ where $P_c$ is a point in $E_2[2]$ corresponding to the point $c$. Since $\phi$ is of odd degree $n$ there are $(n-1)$ such points $(x,P_c)$ with $x \notin E_1[2]$.
 
However, if $\phi(x)=P_c$ so is $\phi(-x)$ and $-x \neq x$. But $(-x,P_c)$ and $(x,P_c)$ map to the same point on $Q_{\phi} \cap  L_c$. Hence there are only $(n-1)/2$ points for each $c$. Since there are $4$ points $c$ we have $Q_{\phi} \cap \#_{\lb_1,\lb_2}$ has  $4\frac{(n-1)}{2}+4=2n+2$ distinct points. 

\subsection{Enumerative geometry of rational curves in  $\CPP$}
\label{ssec-enumerative-geom-ratl-curves-p1-p1} 

If $E_1$ and $E_2$ are isogenous by an isogeny $\phi$ of degree $n$ we see  there is a rational curve $Q_{\phi}$ of bidegree $(n,1)$ which meets the eight distinguished lines at $2(n+1)$  points - and  all of them are double points. Four are double points of the configuration $\#$ and each line $L_c$ is tangential to $Q_{\phi}$ at $\frac{(n-1)}{2}$ points. 

We would like to `deform' this curve to a curve which is defined for all $E_1$ and $E_2$ in the sense that we would like to find a family of rational curves of bidegree $(n,1)$ in the universal family of configurations of lines in $\CPP$ meeting the  lines $L_c$ and ${}_cL$ at many points and further, coincides with $Q_{\phi}$ when $E_1$ and $E_2$ are isogenous via $\phi$. 

While we cannot do this for all the $2n+2$ points, a theorem in the enumerative geometry of $\CPP$, allows us to find such a rational curve if we discard one of the double points. 

Let $\Sigma=X(2)\times X(2)=\CPP$. For $s=(\lb_1,\lb_2) \in \Sigma$ let $\#_s$ denote the $(4,4)$ configuration in $(\CPP)_s=K_{E_{\lb_1}} \times K_{E_{\lb_2}}$. 

Let $T=T^{\gamma}_n$ denote a component of the Hecke correspondence $T_n \subset \Sigma$.  There is a generic  isogeny $\phi_{\eta_T}$ of odd degree $n$ from $E_{1,\eta_T} \rightarrow E_{2,\eta_T}$ where $\eta_T$ is the generic point of  $T$. If $s$ is a point on $T$ let $\phi_s=\phi_{\eta_T}|_s$. 

\begin{thm} Let $n$ be an odd number. Let $\M_{(n,1)}$ denote the  space of (rational) curves of bidegree $(n,1)$ in $\CPP$. Consider pairs $(Q_{(n,1),s},\#_s) \in \M_{n,1} \times \Sigma$   In general $\langle Q_{n,1,s},\#_s\rangle=4n+4$. 
  
 We say  $(\#_s,Q_{(n,1),s})$ satisfies {\bf Condition A} if $Q_{n,1,s} \cap \#_s$ consists of 
	
	\begin{itemize}
		\item  $3$  double points, say $p_1(s), p_2(s)$ and $p_3(s)$  of the configuration $\#_{s}$, no two of which are on the same ruling. 
		\item $L_c(s)$ is tangential to $Q_{n,1,s}$ at $\frac{n-1}{2}$ points for each $c(s)$. 
		\item $2$ other points (which could coincide). 
	\end{itemize}
Let $I \subset \M_{(n,1)} \times \Sigma$ be the incidence scheme consisting of pairs $(Q_{n,1,s},\#_s)$ satisfying {\bf Condition A} and let 
$$\pi:I \rightarrow \Sigma$$ 
$$\pi((Q_{n,1,s},\#_s))=s$$
Suppose $\pi^{-1}(s_0)$ is non-empty and finite for some $s_0$. Then $\pi^{-1}(s)$ is non-empty  and finite  for all $s \in \Sigma$.

From Lemma \ref{doubleintersection}, the curve $Q_{n,1,s_0}=Q_{\phi_{s_0}}$ satisfies {\bf Condition A} for $s_0\in T^{\gamma}_n$. Hence for any $s\in \Sigma$ there  exists a rational curve of bidegree $(n,1)$ satisfying {\bf Condition A}

\label{enumgeom}
\end{thm}
\begin{proof}
This is an analogue of the statement that there are finitely many rational curves of degree $d$ passing through $k$ points and tangent to $3d-1-k$ lines in $\CP^2$. 

Let $\M_{(n,1)}$ be the space of rational curves of bi-degree $(n,1)$ in $\CPP$. We know this is $(2n+1)$ dimensional. The condition that $Q_{n,1}$ passes through a point reduces the dimension by $1$ and the condition that $Q_{n,1}$ is tangent to a line also reduces the dimension by $1$. These $(2n+1)$ conditions are transversal and cut out $I$, hence $I$ is an l.c.i and  
$$\dim(I)=\dim(M_{(n,1)}\times \Sigma) -(2n+1)=2n+1+2-(2n+1)=2=\dim(\Sigma)$$ 
and the map $\pi$ has finite fibres. Since $I$ is an l.c.i it is  Cohen-Macaulay. By the local criterion for flatness (miracle flatness theorem)  since $I$ is Cohen-Macaulay and $\Sigma$ is smooth, the map $\pi$ is flat. 

Since $\pi$ is flat, from Hartshorne \cite{hart}, Theorem 9.9, we have that the Hilbert polynomial is constant. At $s_0$ we know the Hilbert polynomial  is  length of the fibre which is non-zero.  Hence that is the case in  every fibre. 

Explicitly, the equation $aZP(X,Y)+bWQ(X,Y)$ has $(2n+2)$ coefficients. The condition that the curve passes through a point or is tangent to a line determines a linear relation among the coefficients. So we have $2n+1$ linear relations which on a $2n+2$ dimensional space. Since we know there is one solution the system is not inconsistent.

\end{proof}

Coray \cite{cora} shows that there is a unique curve of bidegree $(n,1)$ passing through  $2n+1$ points in  general position in $\CPP$. There is perhaps a similar theorem stating that there are finitely many  curves of bidegree $(n,1)$ passing through $k$ points and tangent $2n+1-k$  lines. 

We have the following corollary

\begin{cor} There is a family of curves $Q^{\gamma}_{n,1,s}$ which restricts to $Q_{\phi_s}$ when $s$ lies on $T^{\gamma}_n$. 
	\label{deformationofQphi} 
\end{cor}

\begin{proof} Let $I_0$ be an irreducible component of $I$ containing the `seed' $Q_{(n,1),s_0}=Q_{\phi_{s_0}}$. Since $\pi: I \rightarrow \Sigma$ is finite and flat, $I_0$ dominates $\Sigma$. Hence the curve corresponding to the generic point of $I_0$ is a generic curve and by construction restricts to $Q_{\phi_s}$ for $s \in T_n^{\gamma}$. Let  $Q^{\gamma}_{(n,1),s}$ be that curve.

\end{proof}

We have the curves $Q^{\gamma}_{n,1,s}$ in $K_{E_{\lb_1}}\times K_{E_{\lb_2}}$ for every $s=(\lb_1,\lb_2)$. We now use them to construct  rational curves in $K_{E_{\lb_1}\times E_{\lb_2}}$ for {\em any} point $(\lb_1,\lb_2)$ on $X(2) \times X(2)$.

\begin{prop} Let $T=T^{\gamma}_n$ denote a component of the Hecke correspondence in $X(2) \times X(2)$ and $\phi_{\eta_T}$ a generic  isogeny of degree $n$ as above. 
	
Let $A_{\eta}=(E_{1} \times E_{2})_{\eta}$ be the generic product of elliptic curves over the generic point $\eta$ of $\Sigma=X(2) \times X(2)$. Then there  exists a rational curve $C^{\phi,\gamma}_{n,1,\eta}$ in the generic fibre $K_{A_{\eta}}$ such that 

	\begin{itemize}
		\item If $s \in \Sigma \backslash T$ then  $C^{\phi,\gamma}_{n,1,s}$ is {\bf irreducible}
		\item If $s \in T$  then $C^{\phi,\gamma}_{n,1,s}$ is {\bf reducible} and  
		$$C^{\phi,\gamma}_{n,1,s}=C_{\phi,s} \cup C_{-\phi,s}$$
		\end{itemize}
Here 	$C^{\phi,\gamma}_{n,1,s}=C^{\phi,\gamma}_{n,1,\eta}|_{K_{A_s}}	$
		\label{rationalcurve}
\end{prop}
\begin{proof} From Theorem \ref{enumgeom} we have a curve $Q_{n,1,s} \subset (\CPP)_s$ of bidegree $(n,1)$ for every $s\in\Sigma$. 
Let $C^{\phi,\gamma}_{n,1,s}=C_{n,1,s}$ be the double cover of $Q_{n,1,s}$ 

If $s$ lies on $T$ then by construction the curve is $Q_{\phi_s}$ which meets $\#_s$ at $2n+2$ double points. In $C_{n,1,s}$ these are the ramification points and hence {\em all the ramification points are singular}.  The  normalization then  is an unramified double cover of $\CP^1$ and hence is the union of two copies of $\CP^1$. From Lemma \ref{doubleintersection} one can see that the two components are $C_{\phi}$ and $C_{-\phi}$. Hence if $s$ lies on $T$, $C_{n,1,s}=C_{\phi} \cup C_{-\phi}$

If $s \in \Sigma\backslash T$ then on $C_{n,1,s}$ there are two ramification points which are smooth. From the Riemann-Hurwitz theorem applied to the normalization one can see that the normalization is an irreducible rational curve. Hence $C_{n,1,s}$ is an irreducible rational curve.

\end{proof}
For instance, if $n=1$, the curve $Q^{id}_{1,1}$ is simply the diagonal in $\CPP$. It passes through $(0,0),(1,1)$ and $(\infty,\infty)$ but does not pass through $(\lb_1,\lb_2)$ unless $E_1$ and $E_2$  are isomorphic as elliptic curves with level $2$ structure. The rational curves $C^{\gamma}_{1,1,s}$ for $s$ not in the diagonal  were used by Sato \cite{sato} to construct motivic cycles. In the next section we show that this can be generalized to our case.

\begin{center}
	
	\begin{tikzpicture}
		
		\filldraw[black,thick](0,0.5) circle(1.5pt) node[below left]{$(\lb_1,\lb_2)$};
		\draw[thin] (-1,0.5)--(6,0.5);
		\draw [thin](0,-1)--(0,6);
		\draw  [thin] (1.5,-1)--(1.5,6);
		\draw  [thin] (3,-1)--(3,6);
		\draw  [thin] (4.5,-1)--(4.5,6);
		\draw  [thin] (-1,1.5)--(6,1.5);
		\draw  [thin] (-1,4.5)--(6,4.5);
		\draw  [thin] (-1,3)--(6,3);
		\filldraw[black](0,1.5) circle(1.5pt) node[below left ]{$0$};
		\filldraw[black](0,3) circle(1.5pt) node[below left]{$1$};
		\filldraw[black,thick](0,4.5) circle(1.5pt) node[below left]{$\infty$};
		\filldraw[black,thick](1.5,0.5) circle(1.5pt) node[below left]{$0$};
		\filldraw[black,thick](3,0.5) circle(1.5pt) node[below left]{$1$};
		\filldraw[black,thick](4.5,0.5) circle(1.5pt) node[below left]{$\infty$};
		\draw [thick] (-1,-1)--(6,6);
		\filldraw[black,thick](0,0) circle(2pt) node[below right]{$t$};
		\filldraw[black,thick](0.5,0.5) circle(2pt) node[above left]{$s$};
		
		\filldraw[black,thick](1.5,1.5) circle(2pt) node[below right]{$(0,0)$};
		\filldraw[black,thick](3,3) circle(2pt) node[below right]{$(1,1)$};
		\filldraw[black,thick](4.5,4.5) circle(2pt) node[below right]{$(\infty,\infty)$};
		\node [above right] at (1.5,2.25) {$Q^{id}_{1,1}$};

	\end{tikzpicture}

The curve $Q_{1,1}^{id}$
	
\end{center}

\subsection{The motivic cycles} \label{motiviccycleconstruction}

Let $n$ be an odd integer and $T=T_n^{\gamma}$ be a component of the Hecke correspondence in $\Sigma=X(2) \times X(2)$. From Proposition \ref{rationalcurve} we have a nodal rational curve  $C_{n,1,\eta}$ on $K_{A_{\eta}}$. Using Proposition \ref{construction} we  obtain a motivic cycle in $H^3_{\M}(\TKE,\Q(2))$. In the case when $\phi$ is the isogeny of degree $1$, this is due to Sato \cite{sato}. 

\begin{lem} \label{lem-higher-chow-cycles-on-KmK3}
Let $A_{\eta}=E_{1,\eta}\times E_{2,\eta}$ be the generic split abelian surface over $\Sigma$. The curve $C_{n,1,\eta}= C^{\phi,\gamma}_{n,1,\eta}$ determines a generically irreducible nodal rational  curve on $K_{A_{\eta}}$. 

If $c_{\eta}$ is a node of $K_{A_{\eta}}$ lying on $C_{n,1,\eta}$  then there exists a cycle 
	$$\xi^{\phi,\gamma}_{n,1,c_{\eta}}=\xi^{\phi}_{c_{\eta}} \in H^3_{\M}(\TK_{A_{\eta}},\Q(2))$$   
	where $\TK_{A_{\eta}}$ is the K3 surface  obtained by blowing up all the nodes of $K_{A_{\eta}}$. 
	
\end{lem}

\begin{proof}

Let $c_{\eta}$ be a node of $C_{n,1,\eta}$ which is also a node of $K_{A_{\eta}}$. From  Theorem \ref{enumgeom} we know there are three such nodes. 

Let $\TC_{n,1,\eta}$ denote the strict transform of $C_{n,1,\eta}$ and let $\RE_{c_{\eta}}$ denote the exceptional fibre over $c_{\eta}$ in the Kummer $K3$ surface $\TK_{A_{\eta}}$ 

Let $P^1_{c_{\eta}}$ and $P^2_{c_{\eta}}$ be the two points in $\TC_{n,1,\eta} \cap \RE_{c_{\eta}}$ lying over the node $c_{\eta}$ and let  $f_{c_{\eta}}$ and $g_{c_{\eta}}$  be functions on $\TC_{n,1,\eta}$ and $\RE_{c_{\eta}}$ respectively with divisors 
$$\div_{\TC_{n,1,\eta}}(f_{c_{\eta}})=P^1_{c_{\eta}}-P^2_{c_{\eta}} \hspace{1in} \div_{\RE_{c_{\eta}}}(g_{c_{\eta}})=P^2_{c_{\eta}} - P^1_{c_{\eta}}$$ 
we further normalize $f_{c_{\eta}}$ by requiring 
$$f_{c_{\eta}}(t_{\eta})=1$$
where $t_{\eta}=(\lb_{1,\eta},y_{\eta})$ is one the two smooth branch points. 

The curve $C_{n,1,\eta}|_s$ is irreducible as long as $s$ does not lie on $T$. In that case using Proposition \ref{construction} we get a motivic cycle in $H^3_{\M}(\TKE,\Q(2))$. However monodromy around $T$ can interchange $P^1_{c_{\eta}}$ and $P^2_{c_{\eta}}$ but by going to an etale cover  $\pi:\Sigma' \rightarrow \Sigma$ we can get a well defined cycle in the generic fibre $\TK_{A_{\eta'}}$, where $\eta'$ is the generic point of $\Sigma'$.  Tracing down gives us a cycle 
$$\xi^{\phi}_{c_{\eta}}=\xi^{\phi,\gamma}_{n,1,c_{\eta}}=\pi_*((\TC_{n,1,\eta'},f_{c_{\eta'}})+(\RE_{c_{\eta'}},g_{c_{\eta'}}))$$
 in the group $H^3_{\M}(\TK_{A_{\eta}},\Q(2))$. 
 
 \end{proof}
 
 The dependence on $\phi$ is because the curve $C_{n,1}$ is obtained by deforming the graph of $\phi$ and the dependence on $c_{\eta}$ is because a different choice of node could give a possibly different cycle. 

In the next section we will show that these cycles are generically indecomposable by computing their non-Archimedean regulator. Further, the cycles corresponding to the six different $\gamma$ are linearly independent as their non-Archimedean regulators are non-zero and different. 

However it appears that $\xi_{n,c_{\eta}}^{\phi,\gamma}$ and $\xi_{n,c'_{\eta}}^{\phi,\gamma}$ have the same non-Archimedean regulator. Sato \cite{sato} shows that when $n=1$ the Archimedean regulators do depend on the choice of cusp $c$ as well - so the space spanned by them is $18$ dimensional. We expect something similar to hold in this case as well.  

For instance, if $n=1$ one has the cycles $\xi^{id,\gamma}_{1,(0,0)}$,$\xi^{id,\gamma}_{1,(1,1)}$ and $\xi^{id,\gamma}_{1,(\infty,\infty)}$ defined on the complement of the diagonal $(\lb,\lb)$. One also has similar  cycles defined on the complement of the lines $(\lb,1/\lb), (\lb,1-\lb)), (\lb,1/(1-\lb)),(\lb,\lb/(1-\lb)$ and $(\lb,(\lb-1)/\lb)$.

\subsection{Indecomposability of the motivic cycle}

\label{indecomposabilityofthecycle}

Recall (Section \ref{nonarchregulator}) that we have the localization sequence for motivic cohomology. We apply it to our case of a family of 
$K3$ surfaces $\TK_{A_s}$ for $s \in \Sigma=X(2) \times X(2)$. The relevant part for us is  
$$ \cdots \rightarrow H^3_{\M}(\TK_{A_{\eta}},\Q(2)) \stackrel{\bigoplus \partial_S}{\longrightarrow} \bigoplus_{S\in \Sigma^1} H^2_{\M}(\TK_{A_S},\Q(1)) \rightarrow H^{4}_{\M}(\TK_{\A_{\Sigma}},\Q(2))\rightarrow \cdots$$
where $\A_{\Sigma}$ is the canonical compactification of $A_{\eta}$. 

To show that our cycle is indecomposable we compute the image of our  cycle $\xi^{\phi}_{c_\eta}$ under the boundary homomorphism and apply Proposition \ref{indecomposiblecriterion}.

\begin{thm} The boundary of the cycle $\xi^{\phi}_{c_{\eta}}$ under the map $\partial=\oplus \partial_S$ is supported in the fibre over $T=T_n^{\gamma}$ and is 
		$$\partial (\xi^{\phi,\gamma}_{n,c_{\eta}})=(\TC_{\phi_T} -\TC_{-\phi_T})$$
up to the boundary of a decomposable element. Here $\TC_{\phi_T}$ is the closure in $\TK_{A_T}$ of  the strict transform  in $\TK_{A_{\eta_T}}$ of the  image $C_{\phi_{\eta_T}}$ of the graph $\Gamma_{\phi_{\eta_T}}$ in $A_{\eta_T}$.

In particular, the cycle $\TC_{\phi_T} -\TC_{-\phi_T}$ is not the restriction of the closure of a  cycle in the fibre over the complement and  hence  $\xi_{c_{\eta}}^{\phi}$ is generically {\em indecomposable}.
\label{mainthm}
	
\end{thm}

\begin{proof} Under the map $\TK_{A_{\eta}}\longrightarrow K_{A_{\eta}}$ all the nodes of $K_{A_{\eta}}$ are blown up. For $c_{\eta}$ a node of $K_{A_{\eta}}$, let ${\mathrm E}_{c_{\eta}}$ denote the exceptional fibre over $c_{\eta}$.
	
To compute $\partial$ we first need to compute the closure of $\TC_{n,1,\eta}$ in the universal family. When $s$ does not lie on $T$ the curve $\TC_{n,1,s}$ is irreducible. 

However, on $\eta_T$, the generic point of $T$, the generically smooth branch points coincide to give another node of $C_{n,1,\eta_T}$ at a point $p_{\eta_T}=(\lb_{1,\eta_T},\lb_{2,\eta_T})$ on $(\CPP)_{\eta_T}$ (as we have chosen the component of $T_n^{\gamma}$ so that $(\lb_1,\lb_2)$ lie on $C_{\phi}$).  $p_{\eta_T}$ is a node of the Kummer surface $K_{A_{\eta_T}}$ as well. Let $p_T$ denote its closure in $K_{A_T}$.  

Note that both the moduli of products of elliptic curves and the product of Kummer surfaces are $\CPP$ and we are making use of the point $(\lb_1,\lb_2)$ on both - one as a moduli point and one as a node of $\#_{\lb_1,\lb_2}$ when $\eta_T=(\lb_1,\lb_2)$ lies on $T$. 

In the  Kummer $K3$ surface $\TK_{A_T}$, the closure $\TCC_{n,1,\eta}$  meets the exceptional fibre $\RE_{p_T}$ over the node $p_T$ at two points and has fibre 
$$\TC_{\phi_T} \cup \TC_{-\phi_T} + \RE_{p_T} $$
where $\TC_{\phi_{\eta_T}}$ and $\TC_{-\phi_{\eta_T}}$ are the strict transforms of the curves $C_{\phi_{\eta_T}}$ and $C_{-\phi_{\eta_T}}$ respectively.

\begin{center}
	
	\begin{tikzpicture}
		
		\draw[name path=conic1,
		domain=1:3,smooth,variable=\x,black] plot ({\x*\x},{\x});
		\draw[name path=conic2,domain=-3:-1,smooth,variable=\x,black] plot ({\x*\x},{\x});

		\draw[name path=line, black] (4,-3) -- (4,3);
		\path[name intersections={of=conic1 and line,by={A1}}];
		\path[name intersections={of=conic2 and line,by={B1}}];
		\node [right] at (4,0) {$\RE_p$};
		\foreach \point in {A1,B1}
		\fill (\point) circle (2pt);
		\node[above right] at (A1) {$\TC_{\phi}$};
		\node[below right] at (B1) {$\TC_{-\phi}$};
		
	\end{tikzpicture}

The fibre over the nodal point $p$ when $(\lb_1,\lb_2)$ lies on $T_n^{\gamma}$.

\end{center}
	
There is an involution $\iota$ on $C_{n,1,\eta}$ determined by the double cover $\KE \longrightarrow \CPP$ which fixes the ramified points. Over $T$ it interchanges the two components. 
$$\iota(C_{\pm\phi_T})=C_{\mp\phi_T}$$
This involution lifts to the  surface $\TK_{\eta}$ and stabilizes the exceptional fibres $\RE_{c_{\eta}}$
$$ \iota(\RE_{c_{\eta}})=\RE_{c_{\eta}}$$
If $P^1_{c_{\eta}}$ and $P^2_{c_{\eta}}$ are the two points lying over a node $c_{\eta}$ then $\iota(P^1_{c_{\eta}})=P^2_{c_{\eta}}$ and $\iota(P^2_{c_{\eta}})=P^1_{c_{\eta}}$.

Let $f^{\iota}_{c_{\eta}}(z)=f_{c_{\eta}}(\iota(z))$. Then 
$$\div_{\TC_{n,1,\eta}}(f^{\iota}_{c_{\eta}})=P^2_{c_{\eta}}-P^1_{c_{\eta}}$$
Therefore $f^{\iota}_{c_{\eta}}=\mu/f_{c_{\eta}}$ for some function $\mu$ on the base. However, we have also normalized $f_{c_{\eta}}$ by requiring  $f_{c_{\eta}}(r_{\eta})=1$, where $r_{\eta}$ is one of the smooth branch points.   

Since $\iota(r_{\eta})=r_{\eta}$, $f^{\iota}_{c_{\eta}}(r_{\eta})=1$ as well. Therefore the function $\mu \equiv 1$. 

Now consider the boundary $\partial(\xi^{\phi}_{c_{\eta}})$. Since outside $T$ the curve  $C_{n,1}$ is irreducible the divisor of $f_c$ is of the form 
$$	\div_{\TCC_{n,1}} (f_c)= \UH+a_{\phi} \TC_{\phi_T}+ a_{-\phi} \TC_{-\phi_T} +  a_p \RE_{p_T} $$ 
where $\UH$ is the `horizontal divisor' -  the closure of $P^1_{c_{\eta}}-P^2_{c_{\eta}}$. Since $f^{\iota}_{c_{\eta}}=1/f_{c_{\eta}}$ one has 
$$\div_{\TCC_{n,1,\eta}} (f^{\iota}_{c_{\eta}})= -\UH - a_{\phi} \TC_{\phi_T} - a_{-\phi} \TC_{-\phi_T} -  a_p \RE_{p_T}$$
On the other hand, $\iota(\div_{\TCC_{n,1,\eta}}(f_{c_{\eta}}))=\div_{\TCC_{n,1,\eta}}(f^{\iota}_{c_{\eta}})$. Further $\iota(\TC_{\pm \phi_T})=\TC_{\mp \phi_T}$, $\iota(\UH)=-\UH$ and $\iota(\RE_{p_T})=\RE_{p_T}$. This implies 
$$\div_{\TCC_{n,1,\eta}}(f^{\iota}_{c_{\eta}})=\iota(\div_{\TCC_{n,1,\eta}}(f_{c_{\eta}}))=-\UH+a_{\phi} \TC_{-\phi_T} + a_{-\phi} \TC_{\phi_T} + a_{p} \RE_{p}$$
Comparing coefficients gives $a_{\phi}=-a_{-\phi}$ and $a_p=-a_p$. Therefore $a_p=0$ and 
$$\div_{\TCC_{n,1,\eta}} (f_{c_{\eta}})= \UH+ a_{\phi}( \TC_{\phi_T} - \TC_{-\phi_T}).$$
We now claim $a_{\phi} \neq 0$. To see this, suppose $a_{\phi}=0$. Then, since $\div(f_{c_{\eta}})$ does not contain $\TC_{\phi_T}$, one has 
$$\deg(\div(f_{c_{\eta}}|_{\TC_{\phi_T}}))=0$$
On the other hand  
$$\deg(\div(f_{c_{\eta}}|_{\TC_{\phi_T}}))=(\div(f_{c_{\eta}}). \TC_{\phi_T})=1$$
so we have a contradiction and $a_{\phi} \neq 0$

The remaining part is $\div_{\bar{E_{c_{\eta}}}}(g_{c_{\eta}})=-\UH$. Hence we have 
$$\partial (\xi^{\phi}_{c_{\eta}})=\partial( ((\TC_{n,1,\eta},f_{c_{\eta}})+(E_{c_{\eta}},g_{c_{\eta}}))=a_{\phi}(\TC_{\phi_T}-\TC_{-\phi_T}).$$
In particular, since this is non-zero, the cycle $\xi^{\phi,\gamma}_{n,c}$ is indecomposable. 
 
To compute $a_{\phi}$ we choose a decomposable cycle $(C_{n,1,\eta},b)$ such that $T_n$ lies in the support of $\div(b)$. Such a $b$ exists as $T_n$ is of bidegree $(k,k)$ for some $k$ so $T_n-k(\{\infty\} \times \CP^1+\CP^1 \times \{\infty \})$ is homologically equivalent to $0$. As $\CPP$ is simply connected, this is also rationally equivalent to $0$. Hence there is a function $b$ with 
$$\div(b)=T_n-k(\{\infty\} \times \CP^1+\CP^1 \times \{\infty \})$$
The boundary of this cycle is 
	$$\partial ((C_{n,1,\eta},b))=\div_{\TCC_{n,1,\eta}} (b)=\TC_{\phi_T}+\TC_{-\phi_T}+ \RE_{p_T} + \text { other terms }$$
	where `other terms' refers to the cycles supported over $\{\infty\} \times \CP^1$ and $\CP^1 \times \{\infty \}$. Therefore 
	$$\div_{\TCC_{n,1,\eta}} (b^{-a_{\phi}}f_{c_{\eta}})=\UH - 2 a_{\phi}\TC_{-\phi_T}- a_{\phi} \RE_{p_T} + \text{ other terms }$$ 
	We can restrict this to $\TC_{\phi_T}$. Since $\TC_{\phi_T}$ does not lie in the support, we have 
	$$\deg(\div b^{-a_{\phi}}f_{c_{\eta}}|_{\TC_{\phi_T}})=0.$$
	On the other hand 
	$$\deg(\div b^{-a_{\phi}}f_{c_{\eta}}|_{\TC_{\phi_T}}) = (\UH,\TC_{\phi_T}) -2a_{\phi}(C_{-\phi_T},C_{\phi_T})- a_{\phi}(\RE_{p_T},\TC_{{\phi_T}}) = 1-a_{\phi}$$
	as $C_{\phi_T}$ does not intersect the `other terms'. Therefore $a_{\phi}=1$.
\end{proof}

In the proof we make the assumption that $P^1_{c_{\eta}}$ lies on $\TC_{\phi_T}$ over $T$. This affects the sign. One has eighteen cycles $\xi_{c_{\eta}}^{\phi}$ coming from the three choices of $c_{\eta}$ and six choices of $\gamma$.

From the calculation above the curves $\xi_{n,c_{\eta}}^{\phi,\gamma}$ have the same boundary for $c_{\eta} \in \{0,1,\infty\}$ but if $\gamma \neq \gamma'$ the cycles  $\xi_{n,c_{\eta}}^{\phi,\gamma}$ and $\xi_{n,c_{\eta}'}^{\phi,\gamma'}$ are distinct. Sato shows that $\xi_{1,c_{\eta}}^{id,\gamma}$ have different Archimedean regulators for different values of $c_{\eta}$ and hence produce eighteen independent cycles. We expect something similar to hold in this general case as well. As a consequence, we recover the following well-known result:

\begin{cor}
	The group $H^3_{\M}(A_{\eta},\Q(2))$ is of infinite rank, where $A_{\eta}$ is the generic fibre of the universal family of products of elliptic curves over $X(2) \times X(2)$. 
\end{cor}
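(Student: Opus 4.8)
The plan is to deduce the corollary from the Theorem just proved, using the non-Archimedean regulator to detect infinitely many independent classes. The key mechanism is that for each odd $n$ (and each component $\gamma$, though one $\gamma$ suffices) the boundary of $\xi^{\phi,\gamma}_{n,c}$ is, up to decomposables, the class $\TC_\phi - \TC_{-\phi}$ supported on the fibres over $T_n^\gamma$. These boundary classes live in different fibres for different $n$ — the support is the Hecke locus $T_n$, and $T_n \neq T_m$ as subvarieties of $X(2)\times X(2)$ for $n \neq m$ — so a nontrivial $\Q$-linear combination of the $\xi^{\phi,\gamma}_{n,c}$ would have to have boundary supported on the union of the relevant $T_n$'s, and restricting to a single $T_n$ in that union isolates the corresponding coefficient times $\TC_\phi - \TC_{-\phi}$, which the Theorem shows is nonzero in $H^2_{\M}$ of that fibre. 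Hence the classes $\{\xi^{\phi,\gamma}_{n,c}\}_{n \text{ odd}}$ are linearly independent in $H^3_{\M}(\TKE_{,\eta},\Q(2))$, where the subscript $\eta$ now denotes the generic point of $X(2)\times X(2)$ (after passing to the finite cover on which the sections $P^1_c, P^2_c$ are defined, as noted before Lemma \ref{lem-higher-chow-cycles-on-KmK3}).

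The second step is to transfer this from the Kummer $K3$ surface $\TKE$ to the abelian surface $E_1\times E_2$ itself. There is a correspondence between $\TKE$ and $E_1\times E_2$ — the blow-down $\TKE \to \KE$ followed by the (rational) quotient map relating $\KE$ to $E_1\times E_2$, or more cleanly the degree-two map $E_1\times E_2 \dashrightarrow \KE$ — inducing maps on motivic cohomology in both directions whose composites are multiplication by integers on the relevant pieces. Concretely, pullback $H^3_{\M}(\TKE,\Q(2)) \to H^3_{\M}(E_1\times E_2,\Q(2))$ is injective up to the exceptional divisors and the part of $H^2$ killed by the involution; since the classes we built are detected by their boundary along $T_n$, and the boundary commutes with these correspondences, pulling back preserves linear independence. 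This gives infinitely many independent classes in $H^3_{\M}((E_1\times E_2)_\eta,\Q(2))$, and since $H^3_{\M}$ and $H^2_{\M}(X,\Q(2)) = CH^2(X,1)$ agree for the surface, the group has infinite rank.

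The main obstacle is making the independence argument fully rigorous: one must check that the "boundary of a decomposable element in the fibres over $T_n^\gamma$" error term in the Theorem does not interfere — i.e.\ that $\TC_\phi - \TC_{-\phi}$ remains nonzero in $H^2_{\M}(\text{fibre},\Q(2))$ \emph{modulo decomposables}, which is exactly the content of the Proposition in Section \ref{indecomp} combined with the fact (from Lemma \ref{doubleintersection} and the surrounding discussion) that $\TC_\phi$ and $\TC_{-\phi}$ are distinct strict transforms meeting the exceptional curves differently, so their difference is not a boundary of a constant-coefficient cycle. The cleanest route is to quote the known infinite-dimensionality of $CH^2$ of a surface with geometric genus (Mumford-type) — but here the honest self-contained argument is the $T_n$-separation above — and to remark that, as the Theorem already established indecomposability and the non-Archimedean regulators for distinct $n$ land in distinct fibres, no further computation is needed beyond observing $\bigcup_n T_n$ is an infinite union of distinct divisors. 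I would therefore present the proof in two lines: the $\xi^{\phi,\gamma}_{n,c}$ for $n$ odd have pairwise-disjoint boundary supports up to decomposables and each boundary is nonzero, hence they are independent, hence the rank is infinite; and this survives pullback to $(E_1\times E_2)_\eta$.
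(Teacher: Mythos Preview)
Your proposal is correct and matches the paper's approach: the paper states the corollary without a separate proof, as an immediate consequence of the preceding Theorem that $\partial(\xi^{\phi,\gamma}_{n,c})$ is nonzero and supported on $T_n^{\gamma}$, so that varying $n$ over odd integers gives boundaries on pairwise distinct Hecke divisors and hence independent classes. You supply more detail than the paper does (the transfer from $\TKE$ to $E_1\times E_2$, and the check that the decomposable error term does not interfere), but the underlying argument is the same.
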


\begin{rem} Spiess \cite{spie} constructed motivic cycles on the product of two elliptic  curves over a local field with boundary an isogeny of the special fibre. Our cycle is closely related to his. In fact our construction should also work in the mixed characteristic case as our arguments are purely algebraic. 
	
	\end{rem}

\part{Algebraicity of values of Green's functions}

\section{Green's function and the regulator map}

\label{greensfunctions}

In the previous section, we computed the boundary map, which can be considered to be a `non-Archimedean' regulator. In this section, we compute the Archimedean regulator against a particular choice of $(1,1)$ form after restricting our cycle to the universal family over a modular curve. In \cite{sato} he computed the  regulator against a $(2,0)$ form in the case when $\alpha=1$ and is able to show it is non-trivial (and in fact depends on the choice of node $c$).

Our expression is in terms of higher Green's functions. The fact that this is the case goes back to Zhang \cite{zhan} and independently Mellit \cite{mell}. We apply our formula to a conjecture of Gross, Kohnen and Zagier which asserts that the values of higher Green's functions at CM points is the log of an algebraic number. The conjecture is now a theorem of \cite{li} and \cite{BLY} but their methods are quite different.

\subsection{Higher Green's functions}

Let $\Gamma$ be a congruence subgroup in $\SL_2(\mathbb{Z})$ and let $X=\overline{\Gamma\backslash \HH}$ denote the compactified modular curve of level $\Gamma$. Gross and Zagier \cite{grza} considered certain  `higher' Green's functions on $X \times X$ defined as follows. Let $k\geq 1$. The {\em higher Green's function of weight $k$} for $X$ is defined to be
$$
G^X_{k}(z_1,z_2)=-2 \sum_{\gamma \in \Gamma} \QQ_{k} \left( 1+\frac{|z_1-\gamma z_2|^2}{2 \Im(z_1) \Im(\gamma z_2)}\right).
$$
Here $\QQ_s$ is the Legendre function of the second kind defined by the Laplace integral 
$$
\QQ_{s-1}(t):=\int_0^{\infty} \frac{du}{(t+\sqrt{t^2-1} \cosh(u))^{s}}, \hspace {1cm} t>1,s>1.
$$

\subsection{The conjecture of Gross-Kohnen-Zagier}

In \cite{grza} and \cite{GKZ}, Gross and Zagier and later Gross, Kohnen and Zagier made precise conjectures about the values of higher Green's functions at algebraic points. The conjecture is the following.

Let $\Gamma \subset SL_2(\ZZ)$ be a congruence subgroup  and let $G_s(z_1,z_2)=G^X_s(z_1,z_2)$ be the Green's function defined above.
If $f(z)=\sum c_f(m) q^m$, where $q=e^{2\pi iz}$ is a weakly holomorphic modular form of weight $-2j$  for $\Gamma$ with $c_f(-m) \in \ZZ$ for all $m>0$ let 
$$G^f_{j+1}(z_1,z_2)=\sum_{m>0} c_f(-m)m^j G_{j+1}(z_1,T_mz_2)$$
and let 
$$|T_f|=\bigcup_{c_f(-m)\neq 0} T_m$$
where $T_m \subset X \times X$ is the $m^{th}$ Hecke correspondence. 

Then they conjecture
\begin{conj} Let $f$ be as above and assume $c_f(-m) \in \ZZ$ for all $m>0$. Let $z_1$ and $z_2$ be CM points of discriminants $D_1$ and $D_2$  respectively lying on $X\times X \backslash |T_f|$.  Then there exists an $\alpha  \in \bar{\Q}$  depending only on $D_1, D_2, j$ and $f$  such that 
	$$(D_1D_2)^{j/2} G^f_{j+1}(z_1,z_2)=\log|\alpha|$$

\end{conj}

The original conjecture was stated only for $\Gamma_0(N)$ and  in terms of a Green's function determined by `relations among coefficients of modular forms of weight $(2j+2)$' but it is known that this space can be identified with the space of weakly holomorphic forms of weight $-2j$. More precise versions of this conjecture also specify, for instance, the field containing $\alpha$. 

Several special case or conditional results have been obtained by many people - starting with the original work of  Gross-Zagier \cite{grza}, Gross-Kohnen-Zagier \cite{GKZ}, Zhang \cite{zhan}, Mellit \cite{mell}, Viazovska \cite{viaz}, Zhou \cite{zhou} and Bruinier-Ehlen-Yang \cite{BEY}. This is now a theorem of Li \cite{li} and Bruinier-Li-Yang \cite{BLY}. Their  approach is to use that the higher Green's functions can be realised as Borcherds lifts of weakly holomorphic modular forms. 

In this section we prove  special cases of this conjecture using the motivic cycles that we constructed above.  If $X=X(2)$ the modular curve with full level structure we show that the conjecture holds when $z_1$ and $z_2$ have CM by the same field. 

\subsection{CM cycles}
\label{cmcycles}

Here we largely follow Zhang \cite{zhan}. Let $X=\overline{\Gamma \backslash \HH}$ be a modular curve with sufficient level structure so that the universal family  $E \rightarrow X$ exists. Let $Y=\Gamma \backslash \HH$ be the open modular curve. Let $E_{\eta}$ be the generic fibre and $E_y$ the fibre over a point $y$ of $Y$.  Let $A_{\eta}=E_{\eta}\times E_{\eta}$. Let $\A \rightarrow X$ be the canonical compactification of $A_{\eta}$. For a point $y$ on $Y$ let $A_y=E_y \times E_y$ be the fibre over $y$. More generally, for $j\geq 0$ we consider $A_y^j,A_{\eta}^j$ and let $\A^j$ denote the canonical compactification of $A_{\eta}^j$.

The symmetric group $\MS_{2j}$ acts on $E^{2j}$. Let 
$$\epsilon_{\sgn}=\frac{1}{(2j)!} \sum_{\sigma \in \MS_{2j}} \sgn(\sigma)\sigma$$ 
be the sign idempotent in the group ring.

If $\tau$ is an imaginary quadratic number,  the elliptic curve  $E_{\tau}$ has CM by $\ZZ[\frac{-D+\sqrt{-D}}{2}]$ where $D$ is the discriminant of $\tau$. Consider the cycle 
$$Z_{\tau}=\es(\Gamma_{\sqrt{-D}}-\Gamma_{-\sqrt{-D}})$$
in $H^2_{\M}(A_{\tau},\Q(1))$ and let 
$$Z^j_{\tau}=\es \left(Z_{\tau}^{\times j}\right)$$
where the $Z_{\tau}^{\times j}$ means the $j$-fold external direct product of $Z_{\tau}$. This cycle is in $H^{2j}_{\M}(\A^j_{\tau},\Q(j))$.

The cycle $Z_{\tau}^j$ is usually called the complex multiplication (CM) cycle. It is orthogonal to the generic cycles of codimensional $j$ in $A_{\tau}^j$ and when considered as a codimensional $(j+1)$ cycle on $\A^j$ it lies in $CH^{j+1}_{\hom}(\A^j)$ - that is, it is null homologous. 

For a point $y$  on $Y$, let $\omega_y$ be the canonical differential $dz=\frac{dx}{y}$ and let $V_y=\int_{E_y} \omega_y \wedge \bar{\omega}_y$.  The group $\MS_{2j}$ acts on $H^1(E_y)^{2j}$. Let 
$$\eta^j_y= \frac{\sqrt{\binom{2j}{j}}}{V_y^j}  \es(\omega_{1,y}\wedge \bar{\omega}_{2,y}\wedge \cdots \wedge \bar{\omega}_{2j,y})$$
where $\omega_{i,y}$ means the form $\omega_y$ on the $i^{th}$ copy of $E_y$. $\eta^j_y$. This form $\eta^j_y$ is normalized so that 
$$\int_{A_y^j} \eta^j_y \wedge \eta^j_y=(-1)^j$$ 
If $\tau$ is a CM point with CM by $\sqrt{-D}$ then $V_{\tau}=|\sqrt{-D}|$ and one has 
$$cl(Z^j_{\tau})=\frac{4^j}{\sqrt{(2j)!}} |\sqrt{-D}|^j \eta^j_{\tau}$$
The upshot is that one has a $(j,j)$ form defined for all non-cuspidal $y$ such that when $y$ is a CM point a multiple of  $\eta^j_y$ is the cycle class of the CM cycle at that point. 

\begin{rem} In fact if one considers the `un-normalized' form $\tilde{\eta}^j_y=\es(\omega_{1,y}\wedge \bar{\omega}_{2,y}\wedge \cdots \wedge \bar{\omega}_{2j,y})$ one has a clean constant:
	$$cl(Z^j_{\tau})=\frac{4^j}{j!} \tilde{\eta}^j_{\tau}$$

	\end{rem}

\subsection{Hecke cycles}
\label{heckecycles}

Let $T$ be a component of a Hecke correspondence $T_n \subset X \times X$ for some $n$. If $\eta_T$ is the generic point of $T$ let $E_{\eta_T}$ denote the fibre over it and $E_T$ its canonical compactification. Let $A_{\eta_T}=E_{1,\eta_T} \times E_{2,\eta_T}$ be the fibre over $\eta_T$ and $A_T$ its compactification. There is an isogeny $\phi_{\eta_T}:E_{1,\eta_T} \rightarrow E_{2,\eta_T}$ of degree $n$ and let $\Gamma_{\phi_T}$ denote the  closure of $\Gamma_{\phi_{\eta_T}}$  in $\A_T$. 
Consider the cycle 
$$Z_T=\es(\Gamma_{\phi_T}-\Gamma_{-\phi_T}) \in H^2_{\M}(A_T,\Q(1))$$
and more generally 
$$Z_T^j=\es(Z_T^{\times j}) \in H^{2j}_{M}(A_T^j,\Q(j))$$
where $Z_T^{\times j}$ means the $j$-fold external direct product of $Z_T$. One knows, from the work of Scholl \cite{schoMot} that the projector eliminates the part of $ \Gamma_{\phi_T}-\Gamma_{-\phi_T}$ supported over the cusps. 

The cycles $Z_T^j$ are codimensional $(j+1)$ cycles in $H^{2j+2}_{\M}(\A^j,\Q(j+1))$. 
We call the divisors $T$ Hecke divisors and the cycles $Z^j_T$ Hecke cycles.

Let $g_{Z^j_T}$ denote the Green's current for  $Z^j_T$ which has the following properties 

\begin{itemize}
	
	\item $\frac{\partial \bar{\partial}}{\pi i} g_{Z^j_T} = \delta_{Z^j_T}$
	\item The integral 
	$$\int_{A_t^j(\C)} g_{Z^j_t} \eta=0$$
	for any $\frac{\partial \bar{\partial}}{\pi i}$ closed form $\eta$ on $A_t^j(\C)$ for every $t \in T$, where $Z^j_t=Z^j_T|_{A^j_t}$.
\end{itemize}
The second condition normalizes the choice of Green's current. 

For a point $(z_1,z_2)$ on $X \times X$ let $A_{z_1,z_2}=E_{z_1} \times E_{z_2}$. 
Let $\omega_{z_i}$ be the canonical holomorphic differential form on $E_{z_i}$ and let 
$V_{z_i}=\int_{E_{z_i}} \omega_{z_i} \wedge \bar{\omega}_{z_i}$. 

Let $\eta^j_{z_1,z_2}$ be the form on $A_{z_1,z_2}^j$ given by 
$$\eta^j_{z_1,z_2}=  \frac{\sqrt{\binom{2j}{j}}}{(V_{z_1}V_{z_2})^j}             \es(\omega_{1,z_1}\wedge \bar{\omega}_{2,z_2}\wedge \omega_{3,z_1}\wedge  \cdots \wedge \bar{\omega}_{2j,z_2})$$
where $\omega_{i,z_i}=\omega_{z_1}$ on a copy of  $E_{z_1}$ for odd $i$   and  $\omega_{i,z_i}=\omega_{z_2}$ on a copy of $E_{z_2}$ for even $i$.  Note that when $z_1=z_2=y$ this is  the form $V^j_y\eta^j_y$ above. The forms $\eta_{z_1,z_2}$ and $\eta_y$ share the property that they are invariant under the action of the group. 

We then have the following theorem, generalizing Zhang \cite{zhan}, Proposition 3.4.1,
	
	\begin{thm} Let $T$ be a component of $T_{n_T}$, the $n_T^{th}$ Hecke correspondence. Then we have 
		$$\int_{A^j_{z_1,z_2}} g_{Z^j_T} \eta^j_{z_1,z_2} = \frac{2^j}{\sqrt{\binom{2j}{j}}} n^j_T G_{j+1}(z_1,T z_2)$$

	\label{newgreen}
	\end{thm}

\begin{proof} 
	
	The idea behind the proof is the same as Zhang's \cite{zhan}, Proposition 3.4.1. Both sides have logarithmic singularities along $T$ and are eigenfunctions for the Laplacian and both sides are invariant under the action of $\Gamma \times \Gamma$. 
	
	If $F(z_1,z_2)=\int_{A^j_{z_1,z_2}} g_{Z^j_T} \eta^j_{z_1,z_2} $ then uniqueness of the Green's function implies that 
	$$F(z_1,z_2)=C_j G_{j+1}(z_1,Tz_2)+c_j$$ 
	for some constants $C_j$ and $c$. By restricting to the diagonal and using Zhang's Proposition 3.4.1 we get
    $$C_j=\frac{2^j}{\sqrt{\binom{2j}{j}}} n^j_T \text{ and } c_j=0.$$ 
	The factor $\frac{2^j}{\sqrt{\binom{2j}{j}}}n^j_T$ comes from the fact that Zhang uses normalized cycles. 
\end{proof}

If $\WW \rightarrow B$ is a family of varieties over a base $B$ and $\xi$ is a class in the motivic cohomology of the generic fibre, its regulator can be thought of as a function on $B$ defined at those $s \in B$ where $\xi|_s$ is defined.  
Pairing it against a form $\omega$ of the generic fibre, we get a function
$$ F(s)=\langle \reg(\xi|_s),\omega|_s\rangle.$$
This is analogous to a `normal function' in transcendental algebraic geometry - where it takes values in torus bundle - the intermediate Jacobian. In our case it takes values in the Deligne cohomolgy - which is  a generalised torus - a product of $\C^*$ and $\C$ and one can project to the `real' Deligne cohomology to get a real valued function. 

We would like to understand this function - for which we can use Theorem \ref{newgreen}. Recall that if $f$ is a function on a modular curve $X$  with divisor $\div(f)=\sum a_{\tau} {\tau}$, then, if $y$ and $y'$ are two points,  
$$\log|f(y)/f(y')|=\sum a_{\tau} (G_1(\tau,y)-G_1(\tau,y'))$$
Namely, this shows that the regulator of an element of the group $H^1_{\M}(\eta,\Q(1))=k(X)^*$ evaluated at a divisor of degree $0$ can be expressed in terms of Green's functions of degree $1$ determined by its divisor. Note that the divisor of the function only determines the function up to a constant - so it is only well defined when evaluated on a null homologous cycle. 

We have the following proposition:

\begin{prop}
	Let $X$ be a modular curve and $A^j_{\eta}$ the $j$-fold product of the  generic fibre of the universal abelian surface over $X\times X$. Let $\xi$ be a cycle in the group $H^{2j+1}_{\M}(A^j_{\eta},\Q(j+1))$ with boundary 
	$$\partial(\xi)=\sum_T a_T Z^j_T$$
	where $T$ is a component of the Hecke correspondences $T_{n_T}$  and $Z^j_T$ are the Hecke cycles defined above lying in the fibre over $T$. 
	
	Then if $\eta^j_{z_1,z_2}$ is the $(j,j)$-form defined above, 
	$$\langle \reg_{\R}(\xi),\eta^j_{z_1,z_2}\rangle=  \frac{2^j}{\sqrt{\binom{2j}{j}}} \sum a_T n_T^j G_{j+1}^X(z_1, Tz_2)$$
	as long as $(z_1,z_2)$ does not lie on $\bigcup_{a_T \neq 0} T$.
	\label{reggreen}
	
\end{prop}

\begin{proof}
	The real regulator of an  element of $H^{2j+1}_{\M}(A^j_{\eta},\Q(j+1))$ is a current on  $(j,j)$-forms in the family. From \cite{soul} Chapter III, Theorem 1, for instance,  it can also be obtained as a linear combination of the Green's currents associated to the cycles appearing in the boundary. If 
	$$\partial(\xi)=\sum a_{T} C_{T}$$
	for some irreducible divisors  $T$ on $X \times X$  and cycles $C_{T}$ in the fibres over those divisors, let $g_{C_T}$ be a Green's current associated to the cycle $C_T$. Then one has 
	$$dd^c \reg(\xi)=-\sum a_T \delta_{C_T}$$
	If, further, the boundary of $\xi$ is of the form $\sum a_T Z^j_T$ where $Z^j_T$ are  Hecke cycles, then we have the normalized Green's current $g_{Z^j_T}$ above and so $\reg(\xi)$ and $\sum a_T g_{Z^j_T}$ are both Green's currents for the same cycle. 
	
	Two Green's functions for the same cycle differ by a $dd^c$ closed form so we have 
	$$\reg(\xi)-\sum_T a_T g_{Z^j_T}=\alpha$$
	where $dd^c \alpha=0$. 
	
	From the invariant cycle theorem one has that 
	$$\int_{A_{z_1,z_2}} \alpha|_{A_{z_1,z_2}} \wedge \eta^j_{z_1,z_2}=0$$
	Hence one has 		
	$$\langle \reg(\xi),\eta^j_{z_1,z_2} \rangle = \sum_T a_T \int_{A_{z_1,z_2}}  g_{Z^j_T}\eta^j_{z_1,z_2}  $$
	From  Theorem \ref{newgreen} we get 
	$$\langle \reg(\xi),\eta^j_{z_1,z_2}\rangle = \frac{2^j}{\sqrt{\binom{2j}{j}}} \sum a_T n_T^j G^X_{j+1}(z_1,T z_2)$$
	If $\xi$ is a decomposable cycle then this expression is $0$ as the boundary does not involve any of the Hecke cycles supported on $T$. So if it is non-zero the cycle is indecomposable. In particular, this gives a well defined map on $H^{2j+1}_{\M}(A^j_{\eta},\Q(j))_{ind}$.

\end{proof}

If $\xi$ is a decomposable cycle then one can see that the regulator can be expressed in terms of $G_k$ for $k<(j+1)$. 

\section{Algebraicity Results}

\subsection{Algebraicity of values of Green's functions}

We now apply the above theorem  to show that if $\xi$ is an indecomposable cycle then the values of the higher Green's functions at certain CM points are logarithms of algebraic numbers. 
The idea here is that when $z_1$ and $z_2$ are CM points with CM by the same field,  the form $\eta^j_{z_1,z_2}$ is represented by an algebraic cycle -- so evaluating a current on the form can be translated to a question of intersection of algebraic cycles.

Let $z_1$ and $z_2$ be CM points on a modular curve $X$ with CM by the {\em same} field. Then the corresponding elliptic curves $E_{z_1}$ and $E_{z_2}$ are isogenous elliptic curves with CM and the Abelian surface $A_{z_1,z_2}=E_{z_1} \times E_{z_2}$ has Picard number $4$.

If $\phi$ is the isogeny and $E_{z_z}$ and $E_{z_2}$ have CM by $\Q(\sqrt{-D})$ then the cycle 
$$Z^j_{z_1,z_2}=\es \left( \left( \Gamma_{\phi \circ \sqrt{-D}} - \Gamma_{-\phi \circ \sqrt{-D}} \right)^{\times j}\right)$$
The cycle class of this is a multiple of the forms $\eta^j_{z_1,z_2}$. We have 
$$cl(Z^j_{z_1,z_2})= \frac{2^j}{\sqrt{\binom{2j}{j}}} D^j
\eta^j_{z_1,z_2}$$
Note that the $D^j$ is in fact made up of two $\sqrt{D}^{j}$ terms - one coming from the volume and the other from the normalization of the CM cycle. We then have the following theorem:

\begin{thm}
	Let $X$ be a modular curve and $A_{\eta}$ the generic fibre of the universal abelian surface over $X \times X$. Let $\xi=\sum(C_i,f_i)$ be a cycle in the group $H^{2j+1}_{\M}(A^j_{\eta},\Q(j+1))$ with boundary $\partial(\xi)=\sum_{T} a_{T} Z^j_{T}$, where $Z^j_{T}$ is the Hecke cycle in the fibre over a  Hecke divisor  $T$. 
	
	If $(z_1,z_2)$ is a pair of  CM points with $(z_1,z_2) \in X \times X - \bigcup_{a_T \neq 0} T$ with CM by the {\bf same field}, we have
	$$\frac{4^j}{\binom{2j}{j}} \sum a_T n_T^j D^j G_ {j+1}^X(z_1,T z_2)=  \log \prod_i  \left| f_i(C_i|_{A_{z_1,z_2}} \cap Z^j_{z_1,z_2})\right|$$
	where by $f$ evaluated at a $1$-cycle $\sum m_P P$ we mean $\prod f(P)^{m_P}$. 
	In particular, the value is the logarithm of an algebraic number. 
	\label{algebraicity}
\end{thm}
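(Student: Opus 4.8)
The plan is to evaluate the real regulator of $\xi$ at the form $\eta_y$ using the explicit formula for the Archimedean regulator given in the excerpt, namely
$$\langle \reg_{\R}(\xi),\omega\rangle=\frac{1}{2\pi\sqrt{-1}}\sum_i\int_{C_i-\gamma_i}\log|f_i|\,\omega,$$
and to observe that when $\omega=\eta_y$ is (up to scaling) the cycle class of the CM cycle $S_y$, this integral localizes to the intersection points $C_i\cap S_y$. First I would combine Proposition \ref{reggreen} with the explicit regulator formula: the left-hand side $\frac{1}{2}\sum a_\tau G_2^X(\tau,y)$ equals $\langle\reg_{\R}(\xi|_y),\eta_y\rangle$, so it suffices to show the latter equals $\log\prod|f_i(C_i\cap S_y)|$. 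Since $\eta_y=cl(S_y)$ up to a constant absorbed into the normalization, the current $\log|f_i|$ integrated against the delta-type current $\eta_y$ supported (cohomologically) on $S_y$ picks out the values of $\log|f_i|$ at the finitely many points where $C_i$ meets $S_y$; the path-of-integration corrections $\gamma_i=f_i^*([\infty,0])$ contribute nothing because one may choose $S_y$ (a real-codimension-$2$ cycle) transverse to and disjoint from the $\gamma_i$.

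The key steps, in order, are: (1) rewrite $\frac{1}{2}\sum a_\tau G_2^X(\tau,y)$ as $\langle\reg_{\R}(\xi|_y),\eta_y\rangle$ via Proposition \ref{reggreen}, using that $y$ lies outside the support of $\partial(\xi)$; (2) insert the explicit real-regulator formula, so that we must compute $\frac{1}{2\pi\sqrt{-1}}\sum_i\int_{C_i-\gamma_i}\log|f_i|\,\eta_y$; (3) use $\eta_y=k\cdot cl(S_y)$ together with the fact that $\frac{\partial\bar\partial}{\pi i}$-type manipulations (or directly the description of $\eta_y$ as a Hodge class dual to $S_y$) reduce the pairing of the current $\log|f_i|$ against $\eta_y$ to a sum of residue-type local contributions at $C_i\cap S_y$; (4) identify each local contribution as $\log|f_i(P)|$ with the correct multiplicity $a_P$ coming from the local intersection number of $C_i$ with $S_y$, so that the total is $\log\prod_i|f_i(C_i\cap S_y)|=\log\prod_i\prod_P|f_i(P)|^{a_P}$; (5) conclude algebraicity by noting that $C_i$, $f_i$, and $S_y$ are all defined over $\bar\Q$ (the $C_i$ and $f_i$ by the algebraic construction of $\xi$, and $S_y$ because $y$ is a CM point, so $E_y$ and the graph of $\sqrt{D}$ are defined over an abelian extension), hence each value $f_i(P)$ is algebraic and the product is an algebraic number.

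The main obstacle I expect is step (3)–(4): making precise the claim that pairing the locally-$L^1$ current $\log|f_i|$ against the $(1,1)$-form $\eta_y$ — which represents $cl(S_y)$ but is a smooth form, not literally $\delta_{S_y}$ — yields exactly the sum of point values $\sum_P a_P\log|f_i(P)|$ with no extra archimedean contribution. The honest way to handle this is to use the defining properties of the Green's current $g_2$ (from Zhang, \cite{zhan} Section 3.1): one has $\frac{\partial\bar\partial}{\pi i}g_{S_y}=\delta_{S_y}-[\eta_y]$ for a suitable Green's current $g_{S_y}$, and by Stokes/the Poincaré–Lelong formula one transfers the integral $\int_{C_i}\log|f_i|\,\eta_y$ into a current computation on $C_i$ where $\log|f_i|$ has a logarithmic singularity along $\div(f_i)$; the boundary terms along $\gamma_i$ cancel by the cocycle condition $\sum_i\div(f_i)=0$ exactly as in the verification that $\reg(\xi)$ is well defined, and what survives is the evaluation at $C_i\cap\Supp(\eta_y$-representative$)=C_i\cap S_y$. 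I would present this as a direct consequence of Soulé's formula (as in the proof of Proposition \ref{reggreen}, citing \cite{soul} Chapter III, Theorem 1) applied with the specific representative $\eta_y=cl(S_y)$, so that the regulator is literally $\sum_i\int_{C_i}\log|f_i|\,g_{S_y}$-type expression evaluated by residues; the remaining bookkeeping of signs and multiplicities $a_P$ is routine once the transversality of $S_y$ with the $C_i$ and with the paths $\gamma_i$ is arranged.
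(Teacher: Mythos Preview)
Your approach is correct in outline but takes a genuinely different route from the paper. You attempt a direct analytic computation: insert the explicit integral formula for the real regulator and then argue, via Poincar\'e--Lelong and the Green's-current equation $\frac{\partial\bar\partial}{\pi i}g_{S_y}=\delta_{S_y}-[\eta_y]$, that the integral $\sum_i\int_{C_i}\log|f_i|\,\eta_y$ localizes to the finite intersection $C_i\cap S_y$. You correctly identify this localization (your steps (3)--(4)) as the crux, and your sketch is plausible but would require careful bookkeeping of boundary terms and normalizations to nail down.

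The paper bypasses this analytic work entirely by a structural argument. Since $y$ is a CM point, the form $\eta_y$ is the cycle class of an honest algebraic cycle $S_y\in H^2_{\M}(A_y,\Q(1))$. The paper then invokes the compatibility of the intersection product on motivic cohomology with the cup product on Deligne cohomology (Esnault--Viehweg, Prop.~7.4): pairing $\reg(\xi_y)$ with $\eta_y=cl(S_y)$ is the same as computing the regulator of the motivic product
\[
\xi_y\cdot S_y\;\in\; H^5_{\M}(A_y,\Q(3)).
\]
Elements of this group are zero-cycles with coefficients in $k_z^*$, and the regulator there is \emph{by definition} $\log\prod|a_z|$. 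The explicit product is $\sum_i(C_i\cap S_y, f_i)$, giving $\log\prod_i\prod_P|f_i(P)|^{a_P}$ with no analytic residue computation needed.

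What each buys: your approach is self-contained and would, if completed, essentially reprove the relevant special case of the Esnault--Viehweg compatibility by hand; the paper's approach is shorter and cleaner, and makes transparent why the answer is a product of values of the $f_i$ at algebraic points---it is literally the regulator on $H^5_{\M}$ of an algebraically defined element. Both routes yield algebraicity by the same final observation that $C_i$, $f_i$, and $S_y$ are defined over number fields.
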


\begin{proof}

	 Since $(z_1,z_2)$ is  outside the support of the boundary of the motivic cycle, the cycle $\xi$ restricts to give a cycle $\xi|_{z_1,z_2}=\xi_{z_1,z_2}$ in the motivic cohomology group $H^{2j+1}_{\M}(A^j_{z_1,z_2},\Q(j+1))$. 
	
	There is a compatibility between the intersection product on motivic cohomology  and the cup product on Deligne cohomology, \cite{esvi}, (Proposition 7.4.). In our case this implies that  the regulator of the cycle $\xi$ evaluated at $\eta^j_{z_1,z_2}$ is then the same  as the regulator of the product of the cycles $\xi_{z_1,z_2}$ and $Z^j_{z_1,z_2}$ in the motivic cohomology groups of $A^j_{z_1,z_2}$: 
	$$H^{2j+1}_{\M}(A^j_{z_1,z_2},\Q(j+1)) \otimes H^{2j}_{\M}(A^j_{z_1,z_2},\Q(j)) \longrightarrow H^{4j+1}_{\M}(A^j_{z_1,z_2},\Q(2j+1)).$$
	Elements of the group $H^{4j+1}_{\M}(A^j_{z_1,z_2},\Q(j+1))$ are of the form $\prod_P (P,u_P)^{m_P}$ where $P$ is a point defined over possibly an extension $k_P$ of $K$, $u_P \in K_1(P)=k_P^*$ and $m_P \in \ZZ$. The regulator of such an element is
	$$\reg(\psi)=\log \left( \prod |u_P|^{m_P}\right)$$

	Explicitly, if $\xi_{z_1,z_2}=\sum (C_i|_{z_1,z_2},f_i)$ then
	$$\xi_{z_1,z_2} \cap Z^j_{z_1,z_2}=\sum (f_i, C_i|_{z_1,z_2} \cap Z^j_{z_1,z_2}).$$
	If $C_i|_{z_1,z_2} \cap Z^j_{z_1,z_2}=\sum_{P_i}  m_{P} P$ then 
	$$ \xi_{z_1,z_2} \cap Z^j_{z_1,z_2}=\prod_i \prod_{P \in Z^j_{z_1,z_2} \cap C_i}  (P,f_i(P)^{m_P})$$
	The regulator of this is 
	$$\log \left( \prod_i \prod_P   \left|f_i(P)^{n_P} \right|\right)$$
	On the other hand, from  Proposition \ref{newgreen} we have 
	$$\langle \reg_{\R}(\xi),\eta^j_{z_1,z_2} \rangle = \frac{4^j}{\binom{2j}{j}}\sum_T a_{T} n_T^j D^j G_{j+1}(z_1,Tz_2).$$
	Hence at a CM point $(z_1,z_2)$ we have two expressions for the regulator -- one in terms of a special value  of a higher Green's function and the other in terms of the regulator of an element of  $H^{4j+1}_{\M}(A^j_{z_1,z_2},\Q(2j+1))$.	
	
	Combining the two gives 
	$$\langle \reg_{\R} (\xi_{z_1,z_2}),\eta^j_{z_1,z_2} \rangle=\frac{4^j}{\binom{2j}{j}} \sum a_{\tau} n_T^j D^j G_{j+1}(z_1,Tz_2))=  \log  \prod_i \prod_P \left| f_i(P)^{m_P}\right|$$
	At a CM point $(z_1,z_2)$, the element $\xi_{z_1,z_2}$  and the cycle $Z^j_{z_1,z_2}$ are defined over a number fields. Hence so is $C_i \cap Z^j_{z_1,z_2}$ and the numbers $u_P$ are algebraic numbers.  Therefore  the number 
	$$ \alpha = \prod_i \prod_P  f_i(P)^{m_P}$$
	is the value of an algebraic function evaluated at algebraic points and is hence an algebraic number.

\end{proof}

\subsection{The conjecture of Gross-Kohnen-Zagier.}

The theorem above links cycles in the group $H^{2j+1}_{\M}(A^j_{\eta},\Q(j+1))$ with algebraicity results for Green's functions. 

If $\xi$ is a decomposable cycle then the regulator computed against $\eta_{z_1,z_2}^j$ is $0$ so one cannot get any algebraicity results. Hence we need indecomposable cycles. 
In Section \ref{motiviccycleconstruction} we constructed such cycles in the case when $j=1$ and $X=X(2)$ and we can use them to prove algebraicity results for $G_2$. This results in the following theorem:

\begin{thm} Let $z_1,z_2$ be two CM points with CM by the same field $K=\Q(\sqrt{-D})$. Then  if $T_n^{\gamma}$ is a component of the Hecke correspondence $T_n$ on $X \times X$, where $X=X(2)$ is the Abelian surface with full level 2 structure, one has 
	$$2Dn G_2(z_1,T_n^{\gamma}z_2)=\log|\alpha|$$
	for some algebraic number $\alpha$. 

\label{zagierconjecture}
\end{thm}
\begin{proof}

We constructed the cycle $\xi_{n,c_{\eta}}^{\phi,\gamma}$ in the group $H^3_{\M}(\TK_{A_{\eta}},\Q(2))$, the generic  Kummer $K3$ surface of a product of elliptic curves. By a standard argument we can pull this cycle back to obtain a cycle $\xi$ in $H^3_{\M}(A_{\eta},\Q(2)$, where $A_{\eta}=E_{1,\eta} \times E_{2,\eta}$ is the generic product of elliptic curves. 

From Theorem \ref{mainthm}, we have that the boundary is supported on the fibres over $T^{\gamma}_n$ and 
$$\partial(\xi)=(\Gamma_{\phi_{T_n^{\gamma}}}-\Gamma_{-\phi_{T_n^{\gamma}}})$$
Applying the projector $\es$  to this cycle, as it commutes with the map $\partial$  we get  %
$$\partial(\es(\xi))=\es(\Gamma_{\phi_{T_n^{\gamma}}}-\Gamma_{-\phi_{T_n^{\gamma}}})=Z_{T^{\gamma}_n}$$ 
So we are precisely in the situation of Proposition \ref{reggreen}. Hence the regulator of this cycle can be expressed in terms of Green's functions of weight $2$,
$$\langle \reg(\es(\xi)),\eta^1_{z_1,z_2} \rangle= \sqrt{2} n G_2(z_1,T^{\gamma}_n z_2)$$
Let $(z_1,z_2)$ be a pair of CM points of the same discriminant  not lying on $T_n^{\gamma}$.  Since $cl(Z_{z_1,z_2})=\sqrt{2} D  \eta_{z_1,z_2}^1$, applying  Theorem \ref{algebraicity} we get 
$$2Dn G_2(z_1,T_n^{\gamma}z_2)=\log|\alpha|$$

\end{proof}

Since there are no cusp forms of weight $4$ the conjecture of Gross-Kohnen and Zagier implies that the Hecke correspondences $T_n$ act by $0$ on $S_4(\Gamma(2))$. This translates to the fact that there is a weakly holomorphic modular form $f_n$ of weight $-2$ with principal part $\frac{1}{q^n}$ and one should have 
$$2nDG^{f_n}_2(z_1,z_2)=2nDG_2(z_1,T_nz_2)=\log|\alpha|$$
Summing up the terms over all the  components  $T^{\gamma}_n$ of $T_n$ we get 

\begin{thm} If $(z_1,z_2)$ are CM points of the same discriminant $D$ then 
	$$2DnG_2(z_1,T_nz_2)=\log|\alpha|$$
	for an algebraic number $\alpha$. 
	
\end{thm}

This was  proved by Viazovska \cite{viaz} in her thesis using Borcherd's lifts and not only in for $G_2$ but for all $G_k$. Mellit had proved special cases of $G_2$ along the same lines as our results - namely by constructing motivic cycles. Mellit's case is essentially the case obtained by restricting our cycles to the diagonal, (or more generally a modular curve). In the diagonal case, for instance, we have 
$$V_z G_2(z,Tz)=\sum_{t \in T \cap diag} \sqrt{D_t}G_2(t,z)$$
where $t$ runs through all the (CM) points of intersection of $T$ and the diagonal. Of course the work of Li and Bruinier-Li-Yang supersedes all these cases. 

However, from our motivic methods one gets a little more as we obtain not only the logarithm of $|\alpha|$ but the number itself. 

The automorphic methods of Li and Bruinier-Yang-Li do not distinguish between the cases of when $(z_1,z_2)$ have the same discriminant or not - though Viazovska's work made that distinction. In our case we need to intersect with the CM cycle and that does not exist in the case when $z_1$ and $z_2$ have different discriminants.

\subsection{A generalization}

In fact, our construction and argument above proves a little more. Instead of restricting to CM points we can restrict to Hecke cycles distinct from the Hecke cycles appearing in the boundary. Since the class of a Hecke cycle $Z^j_{T}|_{z_1,z_2}$ is represented by $\eta^j_{z_1,z_2}$ the same argument as above gives us the following theorem: 

\begin{thm} Let $\xi$ be a motivic cycle in $H^{2j+1}_{\M}(A_{\eta},\Q(j+1))$ with boundary 
	$$\partial (\xi)=\sum a_T Z^j_T$$
	Let $Z^j_{T'}$ be a Hecke cycle distinct from the $Z^j_T$ appearing in the boundary. Then 
	$$\langle \reg(\xi),\eta^j_{z_1,z_2}\rangle = \frac{2^j}{\binom{2j}{j}} \sum_T a_T n^j_T G_{j+1}(z_1,Tz_2) = (V_{z_1}V_{z_2})^{-j}\log|f(z_1,z_2)|$$
	for $f$ a rational function in $\bar{\Q}(T')^*$ and $(z_1,z_2)$ on $T'$.  
	\end{thm}
	Note that if $z_1$ and $z_2$ are CM points on $T'$ then they necessarily have the same field of CM.

\subsection{Arakelov pairing}

The above result can best be explained using Arakelov theory, which leads to a second expression for the value of the Green's function. It is as follows.  

One has the Arakelov pairing of two arithmetic cycles $\TC_1$ and $\TC_2$  in the arithmetic variety $\XX/_{\ZZ}$ defined as follows. 
$$\langle C_1,C_2\rangle_{Ar}=\langle C_1,C_2\rangle_{\infty} + \sum_p \langle C_1,C_2\rangle_p$$
where
$$\langle C_1,C_2 \rangle_{\infty} = \langle g_{C_1},\omega_{C_2}\rangle = \langle \omega_{C_1},g_{C_2}\rangle$$ 
and 
$$\langle C_1,C_2 \rangle_p = \langle C_{1,p}.C_{2,p}\rangle \log(p)$$
where $\langle . \rangle$ is the intersection pairing mod $p$. 

If $C_1=\partial(\xi)$ is the boundary of a motivic cycle then it is rationally equivalent to $0$, so $(C_1,C_2)_{Ar}=0$ and 
$$\langle C_1,C_2 \rangle_{\infty}=-\sum_p \langle C_1,C_2 \rangle_p$$
As noted above the regulator is a Green's current for the boundary of the motivic cycle so 
$$\langle C_1,C_2 \rangle_{\infty} = \langle \reg(\xi),\omega_{C_2}\rangle =\langle \partial(\xi),g_{C_1}\rangle $$
Let is apply this in the case when $C_1=\sum a_T Z_T|_{A_{z_1,z_2}}$ and $C_2=Z_{z_1,z_2}$. Then we get 
$$\langle \sum a_T Z_T|_{z_1,z_2},Z_{z_1,z_2} \rangle_{\infty} = \int_{A_{z_1,z_2}} \reg(\xi) \omega_{z_1,z_2}=\sum a_T  \int_{A_{z_1,z_2}} Z_T|_{z_1,z_2}g_{Z_{z_1,z_2}}$$
$$=\frac{4^jD^j}{\binom{2j}{j}} \sum a_T n^j G_{j+1}(z_1,Tz_2)$$
This gives us 
$$               \frac{4^jD^j}{\binom{2j}{j}} \sum a_T n^j G_{j+1}(z_1,Tz_2)= \langle \partial(\xi),Z_{z_1,z_2} \rangle_{\infty}=-\sum_p \langle \partial(\xi),Z_{z_1.z_2} \rangle_p \log(p)$$
Our expression above is in terms of the logarithms of the values of certain functions at some points of intersection of algebraic varieties. To reconcile the two we observe the following. 

If $Z=Z_{z_1,z_2}$ is the CM cycle at $(z_1,z_2)$ then one has 
$$dd^c g_Z+ \delta_Z=\omega_Z$$ 
so this gives
$$\langle \partial(\xi),Z \rangle_{\infty}= \langle \reg (\xi),\omega_Z \rangle $$
$$=\langle \reg(\xi),dd^c g_Z \rangle + \langle \reg(\xi),\delta_Z \rangle$$
Now $ \langle \reg(\xi),dd^c g_Z \rangle = \langle dd^c \reg(\xi),g_Z \rangle = 0$ as the boundary of $\xi$ is rationally equivalent to $0$. Hence we are left with 
$$\langle \partial(\xi),Z \rangle_{\infty}= \langle \reg(\xi),\delta_Z \rangle $$
which the expression we had above. The equality of these two seemingly different expressions - one in terms of the intersection number mod $p$ and the other in terms of the value of functions at points of intersection  can be seen by the product formula. 

The upshot of this is that the algebraicity of the value of Green's functions is directly a consequence of the existence of motivic cycles. 

The advantage of the first point of view is that it suggests that the argument can be extended to the case when $(z_1,z_2)$ have different discriminants by viewing $\langle \reg(\xi),\eta^j_{z_1,z_2}\rangle$ as the Archimedian part of an Arakelov pairing. Since for primes of supersingular reduction there is an honest cycle the pairing 
$$\langle \sum a_T (Z_T)_p,(Z_{z_1,z_2})_p \rangle_p$$
and so the same argument would imply algebraicity in this case as well. We hope to return to this case in the near future.

\
\section{Reconciling the two approaches}

\subsection{Hecke cycles and Hecke operators}

Zagier's conjecture is formulated in terms of relations among Hecke operators. In other words, if $\{a_n\}$ is a relation among Hecke operators acting on modular forms of weight $k$ of the form 
$$\sum a_n T_n=0$$
then one expects the Green's function $\sum a_n n^{k-1}G_k(z_1,T_n z_2)$ to have algebraic values when $(z_1,z_2)$ are CM points. 

A natural question to ask is - on weight $2$ forms the Hecke action is induced by the Hecke correspondences $T_n \subset X \times X$ on the cohomology group $H^1(X,\Q)$. What  are the  geometric objects underlying the Hecke action on weight $k$ forms?

Weight $k$ forms can be realized in the cohomology of $\E^{(k-2)}$ where as before $\E^j$ is the canonical compactification of the universal family of self products of elliptic curves over $X$. The Hecke operator can be realized as the correspondence coming from the cycle $C_m \subset (\E_1 \times \E_2)^{(k-2)} \rightarrow X \times X$ given by $\Gamma_{\phi^{k-2}}$ where $\phi$ is the isogeny corresponding to $T_m$.  Here $\E_1$ denotes $\E$ over the first copy of $X$ and $\E_2$ denotes $\E$ over the second. Fibrewise, over a point $(z_1,z_2)$ lying on $T_m$ this is 
$$((x_1,\phi(x_1)),\dots, (x_{k-2},\phi(x_{k-2}))) \subset (E_{z_1}\times E_{z_2})^{k-2}$$
where $x_i \in E_{z_1}$. This is a codimensional $(k-1)$ subvariety of a $(2k-2)$ dimensional variety. 

Cusp forms of weight $k$ can be identified with differential forms of type $(k-1,0)$ on $\E^{k-2} \rightarrow X$ given by 
$$ f \rightarrow \omega_f=f(z)dzdt_1\cdots dt_{k-2}$$
One has the two projection maps 
$$\pi_i:((\E_1 \times \E_2)^{(k-2)} \rightarrow X \times X) \longrightarrow  \E^{(k-2)} \rightarrow X$$
$$\pi_1((z_1,z_2),(x_1,y_1),\dots (x_{k-2},y_{k-2}))=(z_1,x_1,\dots,x_{k-2})$$
and similarly for $\pi_2$. 

The cycle $C_m$ induces an algebraic correspondence $H^*(\E_1^{k-2},\C) \rightarrow H^*(\E_2^{k-2},\C)$. The Hecke action of $T_m$  is the restriction of this to  $H^{k-1,0}(\E^{k-2})$. That is 
$$ \omega_{T_m(f)} = \pi_{2,*}( \pi_1^*(\omega_f) \cup cl(C_m))$$ 
However, our Hecke cycle is not on $\E^{k-2} \times \E^{k-2}\rightarrow X \times X $ but on $\E^{k-2} \rightarrow X \times X$. To relate the two, we observe the following. Let $\Delta$ be the diagonal embedding  
$$(\E^{(k-2)} \rightarrow X 
\times X)  \hookrightarrow ((\E_1 \times \E_2)^{(k-2)} \rightarrow X \times X)$$
given by 
$$\Delta((z_1,z_2),x_1,\dots,x_{k-2})=((z_1,z_2),(x_1,x_1),\dots,(x_{k-2},x_{k-2}))$$
Then we have $\Delta^*(C_m)=\Gamma_{\phi}^{\times (k-2)/2}$ and 
$$Z_{T_m}=\es(\Delta^*(C_m))$$
If $g$ is a weakly holomorphic modular form of weight $2-k$ and $\sum c(-m)m^{k-1}T_m$ is the corresponding relation among the Hecke correspondences induced by $g$ then one has 
$$\sum c(-m) C_m=0$$
at least as correspondences acting on the space of modular forms. One then has 
$$\es(\Delta^*(\sum c(-m)C_m))=\sum c(-m)Z_{T_m}=0$$
as a correspondence. If one had 
$$\sum c(-m)Z_m\sim_{rat} 0$$
this would mean there is a motivic cycle $\xi_f$ such that $\partial(\xi_f)=\sum_m c(-m)Z_{T_m}$ and 
$$\reg(\xi_g,\eta_{z_1,z_2})=\Phi(g)(z_1,z_2)$$
where $\Phi$ is the Borcherds lift considered by Bruinier-Li-Yang \cite{BLY}. 

In the other direction, if $\xi$ is a generically indecomposable motivic cycle, then $\es(\xi)$ satisfies 
$$\partial(\es(\xi))=\sum a_T Z_{T}$$
with not all $a_T=0$, where $Z_{T}$ are the components of Hecke cycles. Averaging over  the action of $\Gamma(1)/\Gamma(2)$ this gives a cycle such that  $\xi_{sp}$ such that 
$$\partial(\xi_{sp})=\sum a_m Z_{T_m}$$ 
where $Z_{T_m}$ are the Hecke cycles over Hecke correspondences, not merely components of Hecke correspondences. We call such cycles {\em special} motivic cycles. 

If the diagonal pull-back $\Delta^*$ is {\em injective}, this would imply 
$$\sum a_m C_m \sim_{rat} 0 $$
and $\sum a_m C_m=0$ as a correspondence on modular forms. This implies that $\sum a_m q^{-m}$ is the principal part of a weakly holomorphic modular form $g$. 

This suggests that there is a dictionary 
$$ g \longleftrightarrow \xi_g $$
between weakly holomorphic modular forms and special motivic cycles. Let us call a Green's function {\em motivic} if it can be realized as the regulator of a motivic cycle and {\em automorphic} if it can be realized as the Borcherds lift of a weakly holomorphic modular form. We can then conjecture 

\begin{conj}
	Let $g$ be a weakly holomorphic modular form of weight $-2j$ for $\Gamma$. Then there is a motivic cycle $\xi_g \in H^{2j+1}_{\M}(A^j_{\eta},\Q(j))$ such that the motivic Green's function associated to $\xi_g$ and the automorphic Green's function associated to $g$ coincide:
$$\langle\reg(\xi_g),\eta^j_{z_1,z_2}\rangle=\Phi(g,(z_1,z_2))=G^g_{j+1}(z_1,z_2)$$
\label{formscycleconjecture}
\end{conj}
This suggests a motivic interpretation of weakly holomorphic modular forms. This can also be understood in terms of a relation between the localization sequence and the short exact sequence relating weakly holomorphic forms of weight $k$, weak Maass forms of weight $k$  and cusp forms of weight $(2-k)$.

\subsection{The modular  complex.}

Ramakrishnan \cite{rama} introduced the notion of a `modular' complex - a subcomplex of the complex that defines motivic cohomology (or higher Chow groups) of a Shimura variety. This is the subcomplex defined by Shimura subvarieties, boundary components, Hecke translates of all of these and the relations between them. He constructs elements in this modular complex. In fact all known examples of non-trivial elements of the motivic cohomology of Shimura varieties lie in this complex. 

Analogously, in Kuga fibre varieties over a  Shimura variety, one can consider the subcomplex generated by modular subvarieties and special cycles over them, like the CM cycles. The conjecture suggests that there should be a corresponding sequence of modular forms and that the regulators of these cycles can be computed in terms of Borcherds lifts of modular forms. In particular, the regulator of the cycles constructed by \cite{rama} should be given by Borcherds lifts. Recently \cite{ma2026} has made some progress on such results.

\section{Generalizations}

In this paper we considered the case of elements in the group $H^3_{\M}(E_1 \times E_2,\Q(2))$. There are several directions one can generalize this. Our construction is done in the Kummer K3 surface of the product of elliptic curves. One way is to consider more general $K3$ surfaces.

\subsection{More general $K3$ surfaces}

In \cite{sree-simple} we made an analogous construction of motivic cycles on  Kummer surfaces of simple Abelian surfaces. We expect they   imply similar algebraicity results for Bruinier's automorphic  Green's functions on Hilbert modular surfaces as our cycles have boundary on Hecke cycles over Hirzebruch-Zagier modular curves on Hilbert modular surfaces.

In \cite{sreeK3} we extended the construction to degree-$2$ $K3$ surfaces, that is,  double covers of $\CP^2$ branched over a sextic; the Kummer case arises when the sextic degenerates to six lines.

Different singular sextics give rise to $K3$ surfaces which are double covers of del Pezzo surfaces. The moduli of such surfaces correspond to orthogonal Shimura varieties of type $(2,n)$ for $n$ up to $19$ \cite{yuzh}. In \cite{sree-delpezzo} we use $(-1)$-curves to construct several cycles in the generic $K3$ double cover of a del Pezzo surface. Here too, if one knows the existence of rational curves of certain types - as one might expect from enumerative geometry - one can generalize the construction to get infinitely many independent cycles. Since the moduli are orthogonal Shimura varieties as well, perhaps  Borcherds lifts of weakly holomorphic forms are related to regulators of these cycles too. 

A slightly different special case is to consider $K3$ surfaces obtained as the double cover of $\CPP$ ramified at a curve of bidegree $(4,4)$. This is a generalization of the Kummer surface of a product of elliptic curves studied here. The same construction would work there to determine interesting motivic cycles.

\subsection{Higher dimensions}

The other direction is for higher dimensions. In all these cases we have considered the case of $j=1$. However, Conjecture \ref{formscycleconjecture} suggests that there should be motivic cycles in $H^{2j+1}_{\M}(A^j_{\eta},\Q(j))$ for all $j$.

In work in progress \cite{sree-products} we construct cycles in the group $H^5_{\M}(E_{\eta}^4,\Q(3)$ and more generally in the group $H^5_{\M}(A^2_{\eta},\Q(3))$ where $A$ is an Abelian surface - which should provide algebraicity results for $G_3$. Curiously, in this case the space of cusp forms of weight $6$ for $\Gamma(2)$  is $1$ dimensional, so one should not expect $n^2G_3(z_1,T_nz_2)$ to be the log of an algebraic number - but one should expect 
$$a n^2G_3(z_1,T_nz_2)-bm^2G_3(z_1,T_mz_2)=\log|\alpha|$$
for suitable integers $a$ and $b$. That is reflected in our construction.

\part{Appendix - Kannappan Sampath}

Let $n_1, n_2$ and $n_3$ be positive integers; suppose also that $n_1$ is odd. These determine Hecke correspondences $X_{n_i}$ in the surface $X(2) \times X(2)$ corresponding to those pairs of elliptic curves which have an isogeny of degree $n_i$ between them.

Recall from the previous sections, that for a pair of elliptic curves $E_1$ and $E_2$ over $\mathbf{Q}$ with an isogeny $\phi: E_1 \to E_2$, we have the following diagram:
\begin{center} 
\begin{tikzcd}
  \Gamma_\phi \ar[hook]{r} \ar{d} & E_1 \times E_2 \ar{d}{\pi}\\
  C_\phi \ar[hook]{r} \ar{d} & K_{E_1 \times E_2} \ar{d}{\psi} & \widetilde{K}_{E_1 \times E_2} \ar{l}{\text{blow-up}} \\
  Q_\phi \ar[hook]{r} & K_{E_1} \times K_{E_2} \arrow[r, symbol=\simeq] & \mathbb{P}^1 \times \mathbb{P}^1
\end{tikzcd} 
\end{center}
Here, as before,
\begin{itemize} 
\item $\Gamma_\phi$ denotes the graph of the isogeny $\phi$,
\item $K_{E_1 \times E_2}$ denotes the Kummer surface associated to the abelian surface $E_1 \times E_2$,
\item $K_{E_i}$ is the Kummer line associated to $E_i$ (abstractly isomorphic to $\mathbb{P}^1$),
\item the maps $\pi$ and $\psi \circ \pi$ are the canonical quotient maps and their geometry is explicitly described at the beginning of Section~\ref{sec-mot-cycles-on-Km-surfaces},
\item $C_\phi$ and $Q_\phi$ are the images of $\Gamma_\phi$ under $\pi$ and $\psi \circ \pi$ respectively; as discussed in Section~\ref{sec-mot-cycles-on-Km-surfaces}, the curve $C_\phi$ is a rational curve in $K_{E_1 \times E_2}$ and $Q_\phi$ is a rational curve of bidegree $(\deg \phi, 1)$ in $\mathbb{P}^1 \times \mathbb{P}^1$,
\item $\widetilde{K}_{E_1 \times E_2}$ is the Kummer K3 surface associated to $E_1 \times E_2$ obtained by blowing up the 16 points $\{\pi(\alpha_1, \alpha_2): \alpha_i \in E_i [2]\}$.  
\end{itemize}
Let $\widetilde{C}_{\phi}$ denote the strict transform of $C_\phi$ in the Kummer K3 surface $\widetilde{K}_{E_1 \times E_2}$.  As in Lemma~\ref{lem-higher-chow-cycles-on-KmK3}, $n_1$ determines a motivic cycle $\xi_{n_1} \in H^3_{\mathcal{M}}(\widetilde{K}_{E_1 \times E_2}, \mathbb{Q}(2))$ defined for those pairs $(E_1, E_2)$ of elliptic curves whose moduli point is in $(X(2) \times X(2)) \setminus X_{n_1}$. 

For pairs $(E, E')$ of elliptic curves in $X_{n_i}$ (with $i = 2, 3$), we have cycles $\Gamma_{\phi_i} - \Gamma_{-\phi_i}$ in $E \times E'$ coming from the isogeny $\phi_i: E \to E'$ (of degree $n_i$) and the corresponding cycles $\widetilde{Z}_{\phi_i} = \widetilde{C}_{\phi_i}-\widetilde{C}_{-\phi_i}$ in the Kummer K3 surface $\widetilde{K}_{E \times E'}$.

The cycles $\widetilde{Z}_{\phi_i}$ are orthogonal to the generic Neron-Severi group as well as to each other on the supersingular Kummer K3s associated to those points in $X_{n_2} \cap X_{n_3}$. This implies that, on $X_{n_3}$, the cycle $\widetilde{Z}_{\phi_2}$ is a multiple of the CM cycle at a point on $X_{n_2} \cap X_{n_3}$.

Recall that, for a surface $S$, if $\xi = \sum_i (C_i, f_i) \in H^3_{\mathcal{M}}(S, \mathbb{Q}(2)) \simeq \CH^2(S, 1)_{\mathbb{Q}}$ and $Z \in \CH^1(S) = \CH^1(S, 0)$ is a codimension $1$ cycle in $S$, then their intersection is given by
\begin{align}
  \xi \cap Z = \sum_{i} \prod_{x \in C_i \cap Z} f_i(x).
\end{align}
where, for a $0$-cycle $x = \sum a_j x_j$, the notation $f_i(x)$ stands for $\prod_j f_i(x_j)^{a_j}$. In particular, the values of $\xi_{n_1} \cap \widetilde{Z}_{\phi_2}$ defines a function on $X_{n_2}$ defined on $X_{n_2} \cap X_{n_1}^c$. The values of this function at a point $\tau \in X_{n_2} \cap X_{n_3} \cap X_{n_1}^c$ is an algebraic number; moreover, it should be an integral multiple of the value of the higher Green's function of degree $2$ corresponding to the higher Chow cycle $\xi_{n_3}$ at the CM cycle in the Kummer K3 surface corresponding to $\tau$.

Our goal here is to determine the function $\xi_{n_1} \cap \widetilde{Z}_{\phi_2}$ in the case $n_1 = 1$ and $n_2 = 3$. Note now that $\xi_{n_1}$ is Sato's cycle and is discussed as a special case of the constructions of this paper in Section~\ref{ssec-enumerative-geom-ratl-curves-p1-p1}. It will be denoted $\xi_{I}$ in the sequel (where $I$ stands for the identity isogeny).

\subsection{The case $n_1 = 1$ and $n_2 = 3$} Recall from Lemma~\ref{lem-higher-chow-cycles-on-KmK3} that the higher Chow cycle $\xi_I \in H^3_{\mathcal{M}}(\widetilde{K}_{E_1 \times E_2}, \mathbb{Q}(2))$ is defined for all pairs $(E_1, E_2)$ away from the diagonal (also denoted $X_{n_1}$ above) in $X(2) \times X(2)$; moreover, as in Proposition~\ref{prop-higher-chow-cycles-blow-up}, the higher Chow cycle $\xi_{I}$ is determined by a choice of a node $P$ in the rational curve $C_I$ and is explicitly the following
\[\xi_I = (\widetilde{C}_I, f) + (E_P, g)\]
where
\begin{itemize}
\item $\widetilde{C}_I$ is the strict transform of the locus $C_I$ (and is in fact the ``(embedded) normalization'' of the nodal rational curve $C_I$), 
\item $E_P$ is the exceptional fiber over the point $P$ in $\widetilde{K}_{E_1 \times E_2}$ (meeting $\widetilde{C}_I$ at the two points in $\widetilde{C}_I$ lying over $P$),
\item the function $f$ defined on $\widetilde{C}_I$ is such that $\divsr(f)= P_1 - P_2$ where $P_1, P_2$ are the two points in $\widetilde{C}_I \cap E_P$ mapping to $P$ in $C_I$,
\item the function $g$ defined on $E_P$ is such that $\divsr(g) = P_2 - P_1$.
\end{itemize}
For the purpose of describing the function $f$, it is convenient to work the curves $E_1$ and $E_2$ in Legendre form as the Legendre family is naturally identified with the moduli space $X(2)$. 

\subsection{Some computations with elliptic curves in Legendre form} Let $E_\lambda$ denote the elliptic curve in $\mathbb{P}^2_{[X:Y:Z]}$ given by
\begin{equation}
Y^2Z = X(X-Z)(X-\lambda Z) 
\end{equation} 
with $\lambda \neq 0, 1$.

Let $\lambda_1$ and $\lambda_2$ be the Legendre parameters of the elliptic curves $E_1 \subset \mathbb{P}^2_{[X_1:Y_1:Z_1]}$ and $E_2 \subset \mathbb{P}^2_{[X_2:Y_2:Z_2]}$ respectively. Let
\[F_i(x) = x (x - 1)(x- \lambda_i)\]
so that on the affine patch $Z_i = 1$, the curve $E_i$ is given by $y^2 = F_i(x)$. The Kummer surface $K_{E_1 \times E_2}$ is then a double cover of $K_{E_1} \times K_{E_2}$ branched at the configuration $\#_{\lambda_1, \lambda_2}$ of eight lines discussed in Section~\ref{sssec-km-surface-prod-ell-curves}; canonically it is a divisor in the total space $\mathbf{T} = \mathbb{V}(\mathcal{O}_{\mathbb{P}^1 \times \mathbb{P}^1}(2, 2))$ of the line bundle $\mathcal{O}_{\mathbb{P}^1 \times \mathbb{P}^1}(2, 2)$ over $\mathbb{P}^1_{[X_1:Z_1]} \times \mathbb{P}^1_{[X_2:Z_2]}$; it is useful to view $\mathbf{T}$ as being a quasi-projective subvariety of the weighted projective space $\mathbb{P}(1,1,1,1,4)$ and endowed with the coordinates $(X_1X_2, X_1Z_2, X_2Z_1, X_2 Z_2, w)$. In fact, $\mathbf{T}$ is cut out by the equation 
\[w^2 = Z_1Z_2\Phi_1(X_1, Z_1) \Phi_2(X_2, Z_2)\]
 where $\Phi_i(X_i, Z_i) = Z_i^3F_i(X_i/Z_i)$. The curve $C_I$ is the locus in $K_{E_1 \times E_2}$ covering the diagonal in $K_{E_1} \times K_{E_2}$ so it is given by
\[w^2 = Z_1^2F_1(X_1, Z_1) F_2(X_1, Z_1).\]
The nodes in $C_I$ lie over the points $(0, 0), (1, 1)$ and $(\infty, \infty)$ in $\mathbb{P}^1 \times \mathbb{P}^1$.

We will now describe the strict transform $\widetilde{C}_I$ of $C_I$ in the blowup of $K_{E_1 \times E_2}$ at the nodes in $C_I$. 
Over the open subset $U_{Z_1Z_2} = \{Z_1 \neq 0, Z_2 \neq 0\}$ of $K_{E_1 \times E_2}$,  the variety $\widetilde{C}_I$  can be thought of as the blow-up of the plane $\frac{X_1}{Z_1} = \frac{X_2}{Z_2}$ in the node of $C_I$ over $P = (0, 0)$. Accordingly, we will work with local coordinates $(x, w)$ for the plane $\frac{X_1}{Z_1} = \frac{X_2}{Z_2}$ where $x = \frac{X_1}{Z_1} = \frac{X_2}{Z_2}$. In sum, we see that $\left.\widetilde{C}_I\right|_{U_{Z_1Z_2}}$ is a subscheme of $\mathbb{A}^2_{(x, w)} \times \mathbb{P}^1_{[u : v]}$ cut out by 
\[uw = v F_1(x), \quad w^2 = F_1(x) F_2(x).\]
The exceptional fiber $E_P$ meets $\widetilde{C}_I$ where $u = 1$, which gives us $v^2 = \frac{F_2(x)}{F_1(x)}$ over the node and $v = \frac{w}{F_1(x)}$ over other points. The points $P_1$ and $P_2$ lying in $\widetilde{C}_I \cap E_P$ over the node $P = (0, 0)$ are $((0, 0), \pm \sqrt{\frac{\lambda_2}{\lambda_1}})$ and the function $f$ may be taken to be
\[f = \frac{\sqrt{\lambda_1} v + \sqrt{\lambda_2} }{\sqrt{\lambda_1} v - \sqrt{\lambda_2}}.\]

Similarly, we can describe $\widetilde{C}_I$ over the open subset $U_{X_1X_2} = \{X_1 \neq 0, X_2 \neq 0\}$ of $K_{E_1 \times E_2}$ containing the node over $(\infty, \infty)$; $\left.\widetilde{C}_I\right|_{U_{X_1X_2}}$ is a subscheme of $\mathbb{A}^2_{(z, w)} \times \mathbb{P}^1_{[u' : v']}$, with $z = \frac{Z_1}{X_1} = \frac{Z_2}{X_2}$, and is cut out by 
\[u'w = v' (1 - \lambda_1z), \quad w^2 = (1  - \lambda_1z)(1 - \lambda_2z).\]

We will note now that $\widetilde{Z}_\phi$ does not meet $E_P$. Note that the curve $\widetilde{C}_{\pm \phi}$ maps to $Q_\phi = (\psi \circ \pi)(\Gamma_\phi)$ while the exceptional fiber lies over $P = (0, 0)$. We will argue that $\phi$ may be chosen so that $Q_\phi$ does not contain $P$---indeed, if $e$ is a 2-torsion point, then so is $\phi(e)$ (since $\deg \phi$ is odd), and
if $e_0^i, e_1^i$, and $e_{\lambda_i}^i$ be the three non-trivial torsion points on $E_i$, we see that we may take $\phi$ such that $\phi(e_0^1) \neq e_0^2$. Since $\psi \circ \pi$ is the $x$-coordinate map, it follows that $P = (0, 0)$ does not lie on $Q_\phi$.

We now calculate:
\begin{align}
 \xi_I \cap \widetilde{Z}_\phi &= \prod_{z \in \widetilde{C}_I \cap \widetilde{Z}_\phi} f(z) \notag \\
  &=  \prod_{z \in \widetilde{C}_I \cap \widetilde{C}_\phi} f(z)  \prod_{z \in \widetilde{C}_I \cap \widetilde{C}_{-\phi}} f(z)^{-1} 
\end{align}
We compute the intersection $\widetilde{C}_I \cap \widetilde{C}_{\pm \phi}$ by first computing the intersection $Q_I \cap Q_\phi$. Since $Q_I$ is the diagonal in $\mathbb{P}^1\times \mathbb{P}^1$, and $Q_\phi = \{(x(\alpha), x(\phi(\alpha))): \alpha \in E_1\} \cup \{(\infty, \infty)\}$, the intersection $Q_I \cap Q_\phi$ consists of the point $(\infty, \infty)$ and the remaining points are parametrized by those points $\alpha \in E_1$ such that $x(\alpha) = x(\phi(\alpha))$.

The fiber in $\widetilde{K}_{E_1 \times E_2}$ of $\widetilde{C}_I \cap (\widetilde{C}_\phi \cup \widetilde{C}_{-\phi})$ over a point $x = x(\alpha)$ in $Q_I \cap Q_\phi$ consists of two points; one of these point lies on $\widetilde{C}_\phi$ and the other on $\widetilde{C}_{-\phi}$; call them $P_\phi^x$ and $P_{-\phi}^{x}$ respectively. They are explicitly (in the affine chart $\{u = 1\}$) given by:
\begin{align*}
  P_{\pm \phi}^x = \left((x, \sqrt{F_1(x)}), (x, \pm\sqrt{F_2(x)}), \pm\sqrt{\frac{F_2(x)}{F_1(x)}}\right). 
\end{align*}
Similarly, the fiber in $\widetilde{K}_{E_1 \times E_2}$ of $\widetilde{C}_I \cap (\widetilde{C}_\phi \cup \widetilde{C}_{-\phi})$ over $(\infty, \infty)$ consists of two points; viz.,  
\begin{align*}
  \widetilde{P}_{\pm \phi} = (0, 0, \pm 1). 
\end{align*}

Now, we have that
\[f(P_{\pm \phi}^x) = \frac{\pm \sqrt{\lambda_1 F_2(x)} + \sqrt{\lambda_2 F_1(x)} }{\pm \sqrt{\lambda_1 F_2(x)} - \sqrt{\lambda_2 F_1(x)} } \qquad \text{ and }  \qquad f(\widetilde{P}_{\pm \phi}) =  \frac{\pm \sqrt{\lambda_1} + \sqrt{\lambda_2} }{\pm \sqrt{\lambda_1} - \sqrt{\lambda_2} }\]
and the function $\xi_I \cap \widetilde{Z}_\phi$ is
\begin{align}
  \xi_I \cap \widetilde{Z}_\phi &=  \prod_{x \in Q_I \cap Q_\phi} \frac{f(P^x_{\phi})}{f(P^x_{-\phi})} \\
  &= \left(\frac{\sqrt{\lambda_1} + \sqrt{\lambda_2}}{\sqrt{\lambda_1} - \sqrt{\lambda_2}}\right)^2\prod_{x \in (Q_I \cap Q_\phi) \setminus (\infty, \infty)} \left(\frac{\sqrt{\lambda_1 F_2(x)} + \sqrt{\lambda_2 F_1(x)}}{\sqrt{\lambda_1 F_2(x)} - \sqrt{\lambda_2 F_1(x)}}\right)^2 
\end{align}
To finish the computation, it suffices to determine the values of $x$ explicitly.

\subsection{Calculating $x(\phi(\alpha))$ for a 3-isogeny $\phi: E_{\lambda_1} \to E_{\lambda_2}$} We proceed by first computing these $x$-coordinates in a different model for $E_2$ and transform them to points in the Legendre form.

But first, the $3$-torsion points on $E_1$ are given by the following lemma: 

\begin{lem} 
  A point $P_0 = (x_0, y_0)$ is a non-trivial $3$-torsion point on $E_{\lambda}$ if and only if $x_0$ satisfies
  \[3x_0^4 - 4 (\lambda + 1) x_0^3 + 6 \lambda x_0^2 - \lambda^2 = 0.\] 
\end{lem}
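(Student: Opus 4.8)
The plan is to compute the division polynomial condition for 3-torsion directly in Legendre coordinates. Recall that a non-trivial point $P_0 = (x_0, y_0)$ on an elliptic curve $E$ is a 3-torsion point if and only if $2P_0 = -P_0$, equivalently $x(2P_0) = x(P_0)$. For a curve in the form $y^2 = F(x)$ with $F(x) = x(x-1)(x-\lambda)$, the duplication formula gives
\[
x(2P_0) = \left(\frac{F'(x_0)}{2y_0}\right)^2 - a_2 - 2x_0,
\]
where $a_2$ is the coefficient of $x^2$ in $F$, namely $a_2 = -(\lambda+1)$. Setting $x(2P_0) = x_0$ and clearing the denominator $4y_0^2 = 4F(x_0)$ yields a polynomial identity in $x_0$; this is (up to a constant factor) the classical 3-division polynomial $\psi_3$.

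First I would write out $F(x) = x^3 - (\lambda+1)x^2 + \lambda x$, so $F'(x) = 3x^2 - 2(\lambda+1)x + \lambda$. The standard formula for $\psi_3$ on a curve $y^2 = x^3 + a_2 x^2 + a_4 x + a_6$ is
\[
\psi_3(x) = 3x^4 + 4a_2 x^3 + 6 a_4 x^2 + 12 a_6 x + (4 a_2 a_6 - a_4^2).
\]
Here $a_2 = -(\lambda+1)$, $a_4 = \lambda$, $a_6 = 0$, so this specializes to
\[
3x^4 - 4(\lambda+1)x^3 + 6\lambda x^2 - \lambda^2,
\]
which is exactly the claimed polynomial. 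So the computation is essentially a one-line substitution into the known general formula, once one recalls that the non-trivial 3-torsion $x$-coordinates are precisely the roots of $\psi_3$. For a self-contained argument I would instead carry out the elimination $x(2P_0) = x_0$ by hand: substitute $F'(x_0)^2 - (a_2 + 3x_0)\cdot 4F(x_0) = 0$ (the rearranged form of $x(2P_0)=x_0$ after multiplying through by $4F(x_0)$) and expand; the quartic that drops out will agree with the displayed polynomial.

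The only mild subtlety — not really an obstacle — is the bookkeeping of which "trivial" solutions to exclude: the identity $x(2P_0) = x(P_0)$ is also satisfied formally at the 2-torsion points (where $y_0 = 0$ and the duplication formula degenerates), but multiplying by $4F(x_0)$ removes those, so the resulting quartic genuinely cuts out the eight non-trivial 3-torsion points (four values of $x_0$, each with two choices of $y_0$). I would state this explicitly to make the "if and only if" clean. Everything else is routine polynomial algebra, so I expect no real difficulty; the main point to get right is simply matching coefficients with the general division-polynomial formula (or, equivalently, expanding the degree-four elimination correctly).
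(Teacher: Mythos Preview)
Your proposal is correct and follows essentially the same route as the paper: both arguments use the characterization $2P_0 = -P_0$, i.e.\ $x(2P_0) = x_0$, express $x(2P_0)$ via the tangent slope $\mu = F'(x_0)/(2y_0)$, and clear the denominator $4y_0^2 = 4F(x_0)$ to obtain the quartic. The only cosmetic difference is that you also invoke the general $\psi_3$ formula as a shortcut, whereas the paper carries out the tangent-line elimination by hand; the underlying computation is identical.
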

\begin{proof}
  Let $\mu$ denote the slope of the tangent line at $P_0$. Then, the $x$- and $y$-co-ordinates of $2P_0$ and $-P_0$ are given by
  \begin{align*}
    x(2P_0) &= (\lambda + 1) + \mu^2 - 2x_0 & x(-P_0) &=  x_0\\
    y(2P_0) &= -y_0 - \mu(x(2P) -  x_0) & y(-P_0)&= -y_0
  \end{align*}
  (as is readily obtained from the chord-tangent description of the addition law on an elliptic curve in Weierstrass form). Now, $P_0$ is a $3$-torsion point if and only if $2P_0 = -P_0$ which is equivalent to
  \begin{align*}
    x(2P_0) = (\lambda + 1) + \mu^2 - 2x_0 &= x_0 \\
     -y_0 - \mu(x(2P_0) -  x_0) &= - y_0 
  \end{align*}
  Note now that the two equations are equivalent. Plugging in $\mu$, viz.,
\[\mu = \frac{3x_0^2 - 2 (\lambda + 1) x_0 + \lambda}{2y_0} \]
 and noting that $y_0^2 = x_0(x_0-1)(x_0 - \lambda)$, the claimed equivalence follows after straightforward algebra. 
\end{proof}

We wish to explicitly compute the Weierstrass model for the target of the isogeny whose kernel is the subgroup $\langle P_0 \rangle$ generated by a non-trivial $3$-torsion point $P_0$ on $E_\lambda$. For this, we use Kohel's algorithm\footnote{The original ideas are due to V\'elu \cite{V71} (as Kohel mentions in his thesis \cite{Koh96}); see also p.~44 of \cite{Elk98}.} \cite{Koh96}.

\begin{lem}
  Let $P_0 = (x_0, y_0)$ denote a non-trivial $3$-torsion point on $E_\lambda$. Then Kohel's Weierstrass model for $E_\lambda/\langle P_0 \rangle$ is given by $y^2 = G_\lambda(x)$ where 
  \begin{align*}
    G_\lambda(x) &= x^3 - (\lambda + 1)x^2 - (30x_0^2 -20(\lambda + 1)x_0 + 9\lambda) x \\
        &\qquad - 70x_0^3 + 80(\lambda + 1)x_0^2 - 2(8\lambda^2 + 37\lambda + 8)x_0 + 8\lambda + 8\lambda^2.
  \end{align*}
  Moreover, if $\phi: E_\lambda \to E_\lambda/\langle P_0 \rangle$ denotes the isogeny whose kernel is $\langle P_0 \rangle$, then
  \begin{align*}
    x(\phi(Q)) &= \frac{4y^2-(6x^2 - 4 (\lambda + 1)x + 2\lambda)(x-x_0) + (3x -2x_0)(x-x_0)^2}{(x-x_0)^2} \\
               &= \frac{(4x^3 - 4 (\lambda + 1) x^2 + 4 \lambda x)-(6x^2 - 4 (\lambda + 1)x + 2\lambda)(x-x_0) + (3x -2x_0)(x-x_0)^2}{(x-x_0)^2} \\
  \end{align*}
  for points $Q = (x(Q), y(Q)) = (x, y)$ on $E_\lambda$. 
\end{lem}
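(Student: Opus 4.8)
The plan is to deduce both assertions from V\'elu's formulas --- which is exactly what Kohel's algorithm \cite{Koh96} implements --- applied to the cyclic kernel $F = \langle P_0\rangle$. By hypothesis $P_0 = (x_0, y_0)$ has order $3$, so $F = \{O, P_0, -P_0\}$ is a subgroup of order $3$, and since $-P_0 = 2P_0$ shares the abscissa $x_0$ with $P_0$, the group $F$ contributes the single finite $x$-value $x_0$. Write $E_\lambda$ as $y^2 = f(x)$ with $f(x) = x^3 + a_2 x^2 + a_4 x = x(x-1)(x-\lambda)$, so $a_1 = a_3 = a_6 = 0$, $a_2 = -(\lambda+1)$, $a_4 = \lambda$, and assemble the V\'elu data at $P_0$: since $P_0$ is not $2$-torsion, $g^x_{P_0} = f'(x_0) = 3x_0^2 - 2(\lambda+1)x_0 + \lambda$, $v := v_{P_0} = 2g^x_{P_0}$, $u := u_{P_0} = 4y_0^2$, $w := u_{P_0} + x_0 v_{P_0}$. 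The one simplification used is $y_0^2 = f(x_0) = x_0^3 - (\lambda+1)x_0^2 + \lambda x_0$, which renders $w$ a polynomial in $x_0$ and $\lambda$ alone. Substituting $v, w$ into V\'elu's codomain $E_\lambda/F : y^2 = x^3 + a_2 x^2 + (a_4 - 5v)x + (a_6 - b_2 v - 7w)$, with $b_2 = a_1^2 + 4a_2 = -4(\lambda+1)$, and expanding yields precisely $y^2 = G_\lambda(x)$ --- a routine if somewhat lengthy polynomial identity.

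For the isogeny itself, V\'elu's formula in degree $3$ reads
\[ x(\phi(Q)) = x + \frac{v}{x-x_0} + \frac{u}{(x-x_0)^2} = \frac{x(x-x_0)^2 + v(x-x_0) + u}{(x-x_0)^2} \]
for $Q = (x,y)$. It then remains to recognise the numerator, a monic cubic in $x$, in the displayed shape $4y^2 - (6x^2 - 4(\lambda+1)x + 2\lambda)(x-x_0) + (3x-2x_0)(x-x_0)^2$: subtracting $(3x-2x_0)(x-x_0)^2$ drops the degree to $\le 1$, and comparing the coefficients of $x^3, x^2, x, 1$ with $x(x-x_0)^2 + v(x-x_0) + u$ --- using $u = 4f(x_0)$ once more --- identifies the middle term and matches the constant, the leading $4x^3$ of $4f(x)$ absorbing the discrepancy with the monic cubic. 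The second displayed form in the lemma is then just the substitution $4y^2 = 4x^3 - 4(\lambda+1)x^2 + 4\lambda x$.

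I expect no genuine obstacle here; the content is bookkeeping with V\'elu's / Kohel's formulas. The point that wants care is the normalisation convention: one must use Kohel's normalisation, in which $\phi$ pulls back the invariant differential of $E_\lambda/\langle P_0\rangle$ to that of $E_\lambda$, since this is what fixes the precise constants ($5v$ and $b_2 v + 7w$ in the codomain, and the absence of any rescaling in the $x$-coordinate map). The repackaging of the numerator of $x(\phi(Q))$ into the $4y^2$-form is a purely cosmetic identity, confirmed by expanding both sides.
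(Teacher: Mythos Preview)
Your proposal is correct and follows exactly the approach the paper invokes: the paper's proof is simply a reference to Kohel's thesis \cite{Koh96} (with the footnote attributing the underlying idea to V\'elu), whereas you actually carry out the V\'elu/Kohel computation in the special case at hand. The verification of the codomain coefficients and of the numerator identity $x(x-x_0)^2 + v(x-x_0) + u = 4f(x) - 2f'(x)(x-x_0) + (3x-2x_0)(x-x_0)^2$ (using $v=2f'(x_0)$, $u=4f(x_0)$) goes through exactly as you indicate.
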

\begin{proof}
  For proof, we refer to Kohel's thesis \cite{Koh96} where this is worked out in greater generality. 
\end{proof}

Now let $\beta: E_{\lambda_1}/ \langle P_0 \rangle \to E_{\lambda_2}$ be the isomorphism between the Weierstrass form $E_{\lambda_1}/ \langle P_0 \rangle$ and its Legendre form. This isomorphism is easy to describe---if $x_0(\lambda_1), x_1(\lambda_1), x_2(\lambda_1)$ are the roots of the polynomial equation $G_{\lambda_1}(x) = 0$, then we may translate $x$ so that $x_0(\lambda_1)$ maps to $0$, then scale $x$ so that $x_1(\lambda_1) - x_0(\lambda_1)$ maps to $1$ (this will involve scaling $y$ correspondingly); one sees that the Legendre parameter of the target of the isogeny $\phi$ is then \[\lambda_2 = \frac{x_3(\lambda_1) - x_0(\lambda_1)}{x_2(\lambda_1) - x_0(\lambda_1)}.\] 
As is clear from the description, the map $\beta$ is harder to write down explicitly over (a suitable extension of) $\mathbf{Q}(\lambda_1)$.

The calculation is complete once we solve the polynomial equation
\[x(\alpha) = x((\beta \circ \phi)(\alpha))\]
for $\alpha \in E_1$.

\bibliographystyle{alpha}
\bibliography{/home/ramesh/Desktop/Bibliography/AlgebraicCycles.bib}

\begin{tabular}[t]{l@{\extracolsep{8em}}l} 
	Kannappan Sampath & Ramesh Sreekantan\\
	Department of Mathematics  & Statistics and Mathematics Department\\
	IISER, Pune  & Indian Statistical Institute \\
	Dr. Homi Bhabha Road & 8th Mile, Mysore Road \\
	Pune 411 008 & Bengaluru 560 059 \\ 
	India & India \\
	kntrichy@gmail.com & rameshsreekantan@gmail.com 
\end{tabular}

\end{document}